\newcommand*{\tr}{\mathrm{tr}}
\numberwithin{equation}{section}
\newtheorem{theo}{Theorem}%[section]
\newtheorem{lem}{Lemma}
\newtheorem{mcor}{Corollary}
\newtheorem{remark}{Remark}
\newtheorem{definition}{Definition}
\begin{document}
\title[An equivalence theorem of a class of Minkowski norms...]{An equivalence theorem of a class of Minkowski norms and its applications}

\author[Huitao Feng]{Huitao Feng$^1$}
\author[Yuhua Han]{Yuhua Han}
\author[Ming Li]{Ming Li$^2$}

\address{Huitao Feng:  Chern Institute of Mathematics \& LPMC, Nankai University, Tianjin
300071, P. R. China}
\email{fht@nankai.edu.cn}
\address{Yuhua Han:  Chern Institute of Mathematics \& LPMC, Nankai University, Tianjin
300071, P. R. China}
\email{827053901@qq.com}
\address{Ming Li: Mathematical Science Research Center ,
Chongqing University of Technology,
Chongqing 400054, P. R. China }
\email{mingli@cqut.edu.cn}

\thanks{$^1$~Partially supported by NSFC (Grant No. 11221091, 11271062, 11571184, 11931007) and the Fundamental Research Funds for the General Universities and Nankai Zhide Foundation.}

\thanks{$^2$~Partially supported by NSFC (Grant No. 11501067, 11571184, 11871126) and CSC Visiting Scholar Program}

\maketitle

\begin{center}
  \textit{In memory of Professor Zhengguo Bai}
\end{center}

%\date{\today}
\begin{abstract}
In this paper, the Cartan tensors of the $(\alpha,\beta)$-norms are investigated in details. Then an equivalence theorem of $(\alpha,\beta)$-norms is proved.  As a consequence in Finsler geometry, general $(\alpha,\beta)$-metrics on smooth manifolds of dimension $n\geq4$ with vanishing Landsberg curvatures must be Berwald manifolds.
\end{abstract}

%\tableofcontents

\section*{Introduction}

In Finsler geometry, there are two classes of important manifolds, namely Berwald manifolds and Landsberg manifolds. By definition, one has the following relation between them
\begin{center}
\{Berwald manifolds\} $\subseteq$ \{Landsberg manifolds\}.
\end{center}
However, people couldn't find Landsberg manifolds which are not Berwald manifolds, since L. Berwald introduced these concepts during 1920s.

A natural and long-standing problem in Finsler geometry is the following:
\begin{center}
\emph {Is there any Landsberg metric which is not of Berwald type?}.
\end{center}

In 1996, Matsumoto listed in the short survey \cite{Matsumoto3} many results which shows Landsberg spaces under certain conditions, for example vanishing Douglas curvature,  having C-reducible metrics..., are Berwald. Finally, he listed it as an open problem in his paper. %It should be mentioned that Matsumoto discussed Finsler structures in a general setting which contains singular ones.

%In 1990s, the problem attracted many attentions, which

In 2005, D. Bao suggested to name Landsberg metrics that are not Berwald type as \emph{unicorns} in \cite{Bao}. Since then this problem is referred to as the \emph{unicorn problem}.
Many authors make contributions on this problem(c.f. \cite{Aikou10,BaIlKi,BaMa,Bao,Cra1,Cra2,Li1,LiZ,Matveev09,Muz,Szabo08,Szabo09,Tor,ZL,ZohM,ZouCheng14}).

The unicorn problem has no global requirements for the base manifold, thus it looks like a local problem.  However the progress on the unicorn problem suggests that it is a fiberwise global problem.

In \cite{Li1}, one of the author finds a way of using equivalence theorems of Minkowski spaces to investigate the unicorn problem. One of the results in  \cite{Li1} is that a Landsberg manifold of dimension $n\geq3$ with vanishing mean Berwald curvature $\mathbf{E}$ must be Berwald. Furthermore, Crampin \cite{Cra2} improved this result by replacing the condition $\mathbf{E}=0$ by the Berwald scalar curvature $\mathbf{e}:=\tr\mathbf{E}=0$. Another way to obtain Crampin's result can also be found in \cite{LiZ}. The crucial fact behind these results is that the unit tangent spheres of a Finsler manifold are convex and compact.

In literatures, an $(\alpha,\beta)$-metric of $M$ is defined by a Riemannian metric $\boldsymbol{\alpha}$, a 1-form $\boldsymbol{\beta}$ and a $C^{\infty}$ function $\phi(x)$ of a single real variable in the following form
\begin{equation*}
  \mathbf{F}(x,y)=\boldsymbol{\alpha}(x,y)\phi\left(\frac{\boldsymbol{\beta}(x,y)}{\boldsymbol{\alpha}(x,y)}\right), \quad \forall (x,y)\in TM.
\end{equation*}
Restricted on each tangent space of $M$, $\mathbf{F}$ induces $(\alpha,\beta)$-norms (see Section 2 for details) with the same function $\phi$.

If the Finsler metrics are allowed to define not on the entire tangent bundle, in 2006 Asanov \cite{Asanov06a,Asanov06b} constructed unicorns by using  $(\alpha,\beta)$-metrics. Asanov's unicorns are defined on the tangent bundle except from a real line subbundle of $TM$. However, in 2009 Shen \cite{Shen09} calculated the spray coefficients and the Landsberg curvature of an $(\alpha,\beta)$-metric with the assistance of the computer program Maple. Then he was able to prove that a Landsberg manifold of dimension $n\geq3$ with $(\alpha,\beta)$-metrics (on the entire tangent bundles) is Berwald by solving several nontrivial equations.

In this paper, we consider more general metrics given by %a family of $(\alpha,\beta)$-norms on the tangent spaces of $M$ generally gives a Finsler metric
\begin{equation}\label{gab}
  \mathbf{F}(x,y)=\boldsymbol{\alpha}(x,y)\phi\left(x,\frac{\boldsymbol{\beta}(x,y)}{\boldsymbol{\alpha}(x,y)}\right), \quad \forall (x,y)\in TM,
\end{equation}
where $\phi(x,s)$ is a $C^{\infty}$ function of $s$ defined on an interval $I_x$, for any fixed $x\in M$, and $\phi(x,s)$ is smooth on $\{(x,s)\in M\times \mathbb{R}|s\in I_x\}.$
Following \cite{YuZh}, a Finsler metric (\ref{gab}) is called a \emph{general $(\alpha,\beta)$-metric}.  For this type of metrics, we have the following result.
\begin{theo}\label{theo 1}
Let $M$ be a smooth manifold with dimension $n\geq4$. Let $\mathbf{F}$ be a general $(\alpha,\beta)$-metric defined on $TM$. If $M$ is a Landsberg manifold then it is Berwald.
\end{theo}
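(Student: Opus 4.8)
The plan is to prove Theorem~\ref{theo 1} by reducing it, through a rigidity statement for the fibre norms, to the case of ordinary $(\alpha,\beta)$-metrics, and then to invoke Shen's theorem \cite{Shen09}. Fix $x\in M$; the fibre norm $\mathbf F_x=\boldsymbol\alpha_x\,\phi(x,\boldsymbol\beta_x/\boldsymbol\alpha_x)$ is an $(\alpha,\beta)$-norm on $T_xM$, and the first task is to compute its Cartan tensor $\mathbf C_x$ explicitly. One expects $\mathbf C_x$ to have a very constrained algebraic shape: a linear combination of the totally symmetrized product of the angular metric $\mathbf h$ of $\mathbf F$ with a distinguished $1$-form $\mathbf m$ --- the $\boldsymbol\alpha$-orthogonal component of $\boldsymbol\beta$ --- and of $\mathbf m\otimes\mathbf m\otimes\mathbf m$, the two coefficients being universal functions of $s=\boldsymbol\beta/\boldsymbol\alpha$ built from $\phi$ and its $s$-derivatives. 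This is the forthcoming analysis of $(\alpha,\beta)$-norms.

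The central step is an equivalence theorem for $(\alpha,\beta)$-norms: as long as $\boldsymbol\beta_x\neq0$, the Euclidean structure $\boldsymbol\alpha_x$, the line $\mathbb R\,\boldsymbol\beta_x$, and --- after the obvious normalization --- the function $\phi(x,\cdot)$ are all recoverable \emph{intrinsically} from the Minkowski norm $\mathbf F_x$. The recovery should proceed by contracting $\mathbf C_x$ with the inverse angular metric to isolate the direction of $\mathbf m$, hence of $\boldsymbol\beta_x$; then peeling the $\mathbf m\otimes\mathbf m$ term off the fundamental tensor $g$ to recover $\boldsymbol\alpha_x$; and finally reading $\phi(x,\cdot)=\mathbf F_x/\boldsymbol\alpha_x$ off the restriction of $\mathbf F_x$ to a $2$-plane containing the $\boldsymbol\alpha_x$-dual of $\boldsymbol\beta_x$. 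Consequently, any linear isometry between two such fibre norms must carry $\boldsymbol\alpha$ to $\boldsymbol\alpha$, $\boldsymbol\beta$ to $\pm\boldsymbol\beta$ up to scale, and intertwine the functions $\phi$. I expect this to be the main obstacle: one must treat the loci on which the trace of $\mathbf C_x$ vanishes or on which $\phi$ degenerates --- where $\mathbf m$, or at least its smooth dependence on $x$, is not directly visible --- and it is presumably here that the hypothesis $n\geq4$ is essential, the reconstruction being genuinely weaker when the indicatrix is only a curve in a $3$-dimensional space.

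Granting the equivalence theorem, the Finsler conclusion follows quickly. Along any $\mathbf F$-geodesic $\gamma$, the parallel translation $P_t$ of the Berwald connection with reference vector $\dot\gamma$ solves $\dot W^i+N^i_j(\gamma,\dot\gamma)W^j=0$ with $N^i_j=\partial G^i/\partial y^j$, and coincides with that of the Chern connection; it is classical that $\mathbf F$ is Landsberg precisely when all the $P_t$ are \emph{linear isometries} of the fibre Minkowski norms, $\mathbf F_{\gamma(t)}\circ P_t=\mathbf F_{\gamma(0)}$ (equivalently, the Cartan tensor --- whose horizontal derivative along geodesics is the Landsberg tensor --- is parallel along $\gamma$). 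Since $\mathbf F$-geodesics out of a point fill a neighbourhood, the equivalence theorem now forces $\phi(\gamma(t),\cdot)=\phi(\gamma(0),\cdot)$ for all such $\gamma$ --- continuity of $t\mapsto P_t$ with $P_0=\mathrm{id}$ eliminating the sign and scaling ambiguities --- so $\phi(x,s)=\phi_0(s)$ is independent of $x$ on $M$. (If $\boldsymbol\beta\equiv0$ then $\mathbf F^2$ is conformal to $\boldsymbol\alpha^2$, hence Riemannian and Berwald; if $\boldsymbol\beta\not\equiv0$ one works on the open set $\{\boldsymbol\beta\neq0\}$ together with the interior of $\{\boldsymbol\beta=0\}$, which is dense, and concludes everywhere by continuity of the connection coefficients $G^i{}_{jk}$.) Thus $\mathbf F=\boldsymbol\alpha\,\phi_0(\boldsymbol\beta/\boldsymbol\alpha)$ is an ordinary $(\alpha,\beta)$-metric on the full tangent bundle $TM$; being Landsberg with $n\geq4\geq3$, it is Berwald by \cite{Shen09}. (Alternatively one may avoid quoting \cite{Shen09}: the equivalence theorem also forces $P_t$ to preserve $\boldsymbol\alpha$ and $\boldsymbol\beta$, hence $\boldsymbol\beta$ is parallel for the Levi-Civita connection of $\boldsymbol\alpha$, and such a metric is Berwald.)
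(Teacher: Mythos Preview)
Your plan has a genuine gap in the passage from the fibrewise equivalence theorem to the Finsler conclusion. You assert that on a Landsberg space the parallel transport $P_t$ along a geodesic, solving $\dot W^i+N^i_j(\gamma,\dot\gamma)W^j=0$, is a \emph{linear isometry of the fibre Minkowski norms}, $\mathbf F_{\gamma(t)}\circ P_t=\mathbf F_{\gamma(0)}$. That characterisation is \emph{Berwald}, not Landsberg. The reference-vector transport you write down is linear by construction, but on a Landsberg space it is only guaranteed to preserve the single inner product $g_{\dot\gamma}$, not the full norm $\mathbf F$. Conversely, the \emph{nonlinear} parallel transport $P_{\sigma,t}$, solving $\dot y^i+N^i_j(\sigma(t),y(t))\,\dot\sigma^j=0$, always preserves $\mathbf F$, and the Landsberg condition is precisely that it preserves the fundamental tensor $\bar g$ (Lemma~\ref{lemma curvature parallel}); it is linear if and only if the metric is already Berwald. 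Since your equivalence theorem is formulated for \emph{linear} isometries of the norm, it does not apply to either transport under the Landsberg hypothesis, and the reduction ``$\phi(x,s)=\phi_0(s)$'' collapses. Your alternative ending, in which $P_t$ preserves $\boldsymbol\alpha$ and $\boldsymbol\beta$ and hence $\boldsymbol\beta$ is Levi-Civita parallel, suffers from the same circularity.

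The paper's argument is structurally different precisely to avoid this trap. Its equivalence theorem (Theorem~\ref{theo c reducible equivalent}) takes as input a positively homogeneous norm-preserving \emph{diffeomorphism} $f$, not assumed linear, satisfying $f^*\tilde{\bar g}=\bar g$, and compares the cubic forms of the two indicatrices through the curvature identity~(\ref{Rh intergral condition c}); the output is $A=\pm f^*\tilde A$ on the complement of a closed cone $Q$ determined by the characteristic function $q$. Applied with $f=P_{\sigma,t}$ this shows the nonlinear transport preserves the Cartan tensor on $\mathcal Q^c$, hence is linear there, so the spray coefficients $G^i$ are quadratic on $\mathcal Q^c$. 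The exceptional set $\mathcal Q=\{q\equiv1\}$ is then handled not by reconstruction of $\boldsymbol\beta$ but by the explicit ODE analysis of Lemma~\ref{Lemma 5}: the solutions of $q\equiv1$ are analytic in $s$, which lets one match $G^i$ with an analytic function $\tilde G^i$ on a slightly larger cone and propagate quadraticity across $\partial\mathcal Q$. There is no intermediate reduction to an ordinary $(\alpha,\beta)$-metric and no appeal to \cite{Shen09}. The hypothesis $n\geq4$ enters through Schur's lemma on the indicatrix (Lemma~\ref{theo 6}) and through the tensor algebra needed to separate the components of the cubic form in~(\ref{key3}), not through your reconstruction of the direction of $\boldsymbol\beta$.
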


Recently, following Shen's strategy in \cite{Shen09}, it is proved in \cite{ZWL} that Landsberg manifolds of dimension $n\geq3$ must be Berwald if their metrics belong to a special class of general $(\alpha,\beta)$-metrics of the following type
\begin{equation*}
  \mathbf{F}(x,y)=\boldsymbol{\alpha}(x,y)\phi\left(\|\boldsymbol{\beta}\|^2_{\boldsymbol{\alpha}},\frac{\boldsymbol{\beta}(x,y)}{\boldsymbol{\alpha}(x,y)}\right), \quad \forall (x,y)\in TM,
\end{equation*}
where $\phi$ is a smooth function of two real variables. This result is a special case of Theorem \ref{theo 1} when the dimension $n\geq4$.

For general $(\alpha,\beta)$-metrics it seems too complicated and difficult to discuss the unicorn problem by Shen's approach. The strategy in our paper is establishing an equivalence theorem for Minkowski spaces with $(\alpha,\beta)$-norms and then using nonlinear parallel transports.

The paper contains four sections and one appendix and is organized as follows. In Sect. 1, we give a very brief introduction to Finsler geometry and a description of our strategy to deal with the unicorn problem. In Sect. 2, the Cartan tensor of an $(\alpha,\beta)$-norm is formulated using the notations introduced by Shen \cite{Shen09}. In Sect. 3, by using the centroaffine differential geometric structure of indicatrices of $(\alpha,\beta)$-norms, we obtain an equivalence theorem. In Sect. 4, by applying the equivalence theorem to the parallel transports of Finsler manifolds, we give a proof of Theorem \ref{theo 1}. In Appendix,  we investigate an ODE which is critical for our discussion. %In Appendix C, we describe a structure of the tangent bundle of a Finsler manifold with a general $(\alpha,\beta)$-metric. We hope these appendices strengthen the proof of Theorem \ref{theo 1}.

\medskip

{\bf Acknowledgements.}  The third author
would like to express his appreciation to Professor Guofang
Wang for his warm hospitality and discussions on Mathematics, while he stayed in
Mathematical Institute Albert Ludwigs University Freiburg in 2018.

\section{Preliminaries of Finsler geometry}

\subsection{Centroaffine normalization of a nondegenerate hypersurface}
\

In this paper, we emphasis  the centroaffine differential geometry of the indicatrix of a Minkowski norm. For this reason, we would like to review some necessary
facts of an affine hypersurface with the centroaffine normalization. One refers to \cite{LSZ,NS,SSV} for details.

Let $V$ be a real vector space of dimensional $n$ with a chosen orientation.
Let $V^*$ be the dual space of $V$, and $\langle~,~\rangle:V\times V^*\rightarrow \mathbb{R}$ the canonical pairing.
$V$ has a smooth manifold structure and a flat affine connection $\bar{\nabla}$ on $TV$.

%For a fixed element ${\rm det}\in\Lambda^{n+1}(V^*)$, we define an orientation ${\rm Det}\in\Omega^{n+1}(A)$ by
%${\rm Det}(V_1,\ldots, V_n)(p)={\rm det}(V_1(p),\ldots, V_n(p))$, where $p\in A$, and $V_1,\ldots, V_n\in\Gamma(TA)$.
%It is clear that $\bar{\nabla}{\rm Det}=0$.

Let $x:M\rightarrow V$ be an immersed connected oriented smooth manifold $M$ of dimension $n-1$.
Then for each point of $p\in M$, $dx(T_pM)$ is an $n-1$ dimensional subspace of $T_{x(p)}V$,
and defines an one dimensional subspace $C_pM=\{v^*_p\in V^*|\ker v^*_p=dx(T_pM)\}\subset T^*_{x(p)}V$.
The trivial line bundle $CM=\bigcup_pC_pM$ is called the conormal line bundle of $x$. % Choosing an Euclidean structure of $A$, then $CM$ is dual to the normal line bundle which is trivial.

Let $Y$ be a nowhere vanishing section of $CM$.
If ${\rm rank}(dY,Y)=n$, then $x$ is called a nondegenerate hypesurface. The nondegenerate property is independent to the choice of the conormal field $Y$.
We will only concern nondegenerate hypersurfaces.
Let $y:M\rightarrow V$ be the vector filed defined by $y(p)=-x(p)$, $\forall p\in M$.
If $\langle Y, y\rangle=1$, then the pair $\{Y,y\}$ is called the centroaffine normalization of $x$.
%A nondegenerate hypersuface $x$ with a given relative normalization $\{Y,y\}$ will be denoted as the triple $\{x,Y,y\}$.
We state the structure equations of $x(M)$ with respect to the centroaffine normalization $\{Y,y\}$ as following,
\begin{align*}%\label{Weingarten eq}
\bar{\nabla}_vy=dy(v)=-dx(v),%\quad{\rm (Weingarten equation)}
\end{align*}
\begin{align*}%\label{Gauss eq for x}
\bar{\nabla}_vdx(w)=dx(\nabla_vw)+\mathsf{h}(v,w)y,%\quad{\rm (Gauss equation for x)}
\end{align*}
\begin{align*}%\label{Gauss eq for Y}
\bar{\nabla}^*_vdY(w)=dY(\nabla^*_vw)-\mathsf{h}(v,w)Y,
\end{align*}
where $\mathsf{h}$ is a nondegenerate symmetric $(0,2)$-tensor and called the induced affine metric,
$\nabla$ and $\nabla^*$ are torsion free affine connections.
These geometric quantities satisfy
\begin{align}
d\mathsf{h}(v_1,v_2)=\mathsf{h}(\nabla v_1,v_2)+\mathsf{h}(v_1,\nabla^*v_2). \label{conjugate connections}
\end{align}

Then the triple $\{\nabla,\mathsf{h},\nabla^*\}$  are called conjugate connections.
For any triple of conjugate connections $\{\nabla,\mathsf{h},\nabla^*\}$, one can define
$C=\frac{1}{2}(\nabla-\nabla^*)\in\Omega^1(M,{\rm End}(TM))$.
By (\ref{conjugate connections}), one can prove that $C=-\frac{1}{2}\mathsf{h}^{-1}\nabla \mathsf{h}$. Moreover, the $(0,3)$-tensor $\hat{C}:=\mathsf{h}\circ C$ is totally symmetric and called the cubic form of $\{\nabla,\mathsf{h},\nabla^*\}$. The Tchebychev form $\hat{T}$ is defined as the normalized trace of $C$,
\begin{align*}%\label{That1}
\hat{T}=\frac{1}{n-1}{\rm tr}C.
\end{align*}
The Tchebychev field $T$ is the dual of $\hat{T}$ with respect to $\mathsf{h}$.

%For conjugate  connections $\{\nabla,h,\nabla^*\}$,
%the curvatures $R$, $R^*$ and $R^h$ of $\nabla$, $\nabla^*$ and $\nabla^h$ respectively are related as follows
%\begin{align}
%R&=R^h+[\nabla^h,C]+C\circ C,\label{R Rh}\\
%R^*&=R^h-[\nabla^h,C]+C\circ C,\label{R*Rh}
%R&=R^*+2[\nabla^h,C],
%\end{align} where $\nabla^h$ is the Levi-Civita connection of the
%semi-Riemannian metric $h$.

%It is proved that $\nabla$ and $\nabla^*$ are Ricci symmetric.
%Let $\omega(\mathsf{h})$ be the Riemannian volume of $\mathsf{h}$ on $x(M)$ and $\omega$ the induced volume form on $x(M)$ from the orientation of $V$.
%Then it is proved that
%\begin{align}\label{That2}
%\hat{T}=\frac{1}{n-1}d\log\left|\frac{\omega}{\omega(h)}\right|.
%\end{align}

We have the following equations from the integrability of an affine hypersurface $x(M)$ with the centroaffine normalization $\{Y,y\}$,
\begin{align}
R(U,V)W=-\left[\mathsf{h}(U,W)V-\mathsf{h}(V,W)U\right]+\left[C(C(U)W)V-C(C(V)W)U\right],\label{Rh intergral condition c}
\end{align}
and
\begin{align*}
(\nabla^{\mathsf{h}}_UC)(V)W-(\nabla^{\mathsf{h}}_VC)(U)W=0,%\label{nablah C intergral condition c}
\end{align*}
where $\nabla^{\mathsf{h}}$ and $R$ are the Levi-Civita connection and curvature tensor of $\mathsf{h}$, respectively.

For the fundamental theorem of centroaffine differential geometry of hypersurfaces, one refers to \cite{SSV}.

\subsection{A Minkowski norm and its indicatrix with centroaffine normalization}
\

Let $V$ be a $\mathbf{R}$-vector space of dimension $n$. % with the flat connection $\bar{\nabla}$.
Let $\mathbf{F}:V\to [0,+\infty)$ be a function which satisfies:
\begin{enumerate}
\item[(i)] $\mathbf{F}$ is continuous on $V$, and smooth on $V_0:=V\setminus\{0\}$;
\item[(ii)] $\mathbf{F}(\lambda y)=\lambda \mathbf{F}(y)$, \quad $\forall y\in V,\quad \forall \lambda\in \mathbb{R}^+$;
\item[(iii)] $\mathbf{F}$ is strongly convex, i.e., $(g_{ij}):=\left(\frac{1}{2}[F]^2_{y^iy^j}\right)$ is positive definite on $V_0$.
\end{enumerate}

By the strongly convexity of $\mathbf{F}$, $\bar{g}=g_{ij}dy^i\otimes dy^j$ induces a Riemannian metric on $V_0$.  The Cartan tensor is defined by
$$A:=A_{ijk}dy^i\otimes dy^j\otimes dy^k:=\frac{F}{2}\frac{\partial g_{ij}}{\partial y^k}dy^i\otimes dy^j\otimes dy^k.$$ The Cartan form is the trace of $A$ with respect to $\bar{g}$
$$\eta:=g^{jk}A_{ijk}dy^i=:A_idy^i,$$
where $(g^{ij})=(g_{ij})^{-1}$. One can easily get another formula of the Cartan form
\begin{equation}\label{eta}
  \eta=\frac{\mathbf{F}}{2}d\log\det(g_{ij}).
\end{equation}
For the following study in our paper, we also introduce here the angular metric $\mathbf{h}=h_{ij}dy^i\otimes dy^j:=\bar{g}-d\mathbf{F}\otimes d\mathbf{F}$.

In \cite{Li1}, we established the correspondence between the Minkowski spaces and the convex hypersurfaces with the origin in its interior.

 The indicatrix $\mathbf{I}_{\mathbf{F}}:=\{y\in V|\mathbf{F}(y)=1\}$ is a non-degenerate hypersurface in the context of affine differential geometry.
$\{-d\mathbf{F},-y\}$ is the centroaffine normalization of $i:\mathbf{I}_{\mathbf{F}}\hookrightarrow V$.

  Let $\mathsf{h}$, $\hat{C}$ and $\hat{T}$ be the induced affine metric, the cubic form and the Tchebychev form of $i:\mathbf{I}_{\mathbf{F}}\hookrightarrow V$ with respect to the the centroaffine normalization $\{-d\mathbf{F},-y\}$. One can prove that
  \begin{equation}
    \mathsf{h}=i^*\bar{g}=i^*\mathbf{h},\qquad \hat{C}=-i^*A,\qquad \hat{T}=-\frac{1}{n-1}i^*\eta.
  \end{equation}

Conversely, let $i:M\hookrightarrow V$ be an embedded closed non-degenerate hypersurface. If the induced affine metric $\mathsf{h}$ is positive definite with respect to the centroaffine normalization on $M$, then there exists an unique Minkowski norm $\mathbf{F}:V\to [0,+\infty)$ satisfies \textnormal{(i)-(iii) }and $i(M)=\mathbf{I}_{\mathbf{F}}$.
The function $\mathbf{F}$ may be referred as the potential function of the hypersurface $i:M\hookrightarrow V$ with respect to the centroaffine normalization.

\begin{remark}
  \textnormal{The fundamental tensor $\bar{g}$ gives rise to a Hessian metric on $V_0$. The affine differential geometry of the level sets of the potential function of a Hessian domain in $\mathbf{R}^{n}$ is investigated in \cite{Shima}.}
\end{remark}

By using Schneider's rigidity theorem \cite{Schneider1}, the following equivalence theorem is proved in \cite{Li1}.
\begin{theo}[\cite{Li1}]\label{theo equivalence}
Let $(V_1,\mathbf{F}_1)$ and $(V_2,\mathbf{F}_2)$ be two Minkowski spaces of dimension $n\geq3$, respectively.
Let $f:V_1\rightarrow V_2$ be a norm preserving map which is a diffeomorphism on $(V_1)_0:=V_1\setminus\{0\}$, and satisfies
\begin{align*}
f(tv)=tf(v), \quad \forall v\in V_1,~\forall t>0.
\end{align*}
Then $\bar{g}_1=f^*\bar{g}_2$ and $\eta_1=f^*\eta_2$ if and only if  $f$ is linear.
\end{theo}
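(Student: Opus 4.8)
The ``if'' direction is immediate from the definitions: a linear norm-preserving $f$ intertwines the flat affine structures of $V_1$ and $V_2$, so in linear coordinates the Hessians $g_{ij}$ pull back correctly and $\bar g_1=f^*\bar g_2$; moreover $\det(g_1)=(\det f)^2\,(\det g_2)\circ f$ together with $\mathbf F_1=\mathbf F_2\circ f$ and (\ref{eta}) give $\eta_1=f^*\eta_2$. For the converse, the plan is to pass to the indicatrices. Since $f$ is positively homogeneous of degree one and norm preserving, and $f^{-1}$ inherits both properties, $f$ restricts to a diffeomorphism $\phi:=f|_{\mathbf I_{\mathbf F_1}}:\mathbf I_{\mathbf F_1}\to\mathbf I_{\mathbf F_2}$ of the indicatrices, which are \emph{compact} hypersurfaces precisely because $\mathbf F_1,\mathbf F_2$ are genuine Minkowski norms on all of $V_1,V_2$. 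Using $\mathsf h=i^*\bar g$ and $\hat T=-\tfrac1{n-1}i^*\eta$ from the excerpt, the hypotheses $\bar g_1=f^*\bar g_2$ and $\eta_1=f^*\eta_2$ become exactly $\phi^*\mathsf h_2=\mathsf h_1$ and $\phi^*\hat T_2=\hat T_1$; that is, $\phi$ is an isometry of the induced centroaffine metrics that also preserves the Tchebychev form.

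I would then consider the totally symmetric $(0,3)$-tensor $\hat D:=\hat C_1-\phi^*\hat C_2$ on $\mathbf I_{\mathbf F_1}$. Since $\phi$ is an isometry for $\mathsf h_1$ it intertwines the Levi-Civita connections $\nabla^{\mathsf h}$ and the curvature tensors of $\mathsf h_1$ and $\mathsf h_2$; comparing the Gauss-type integrability identity (\ref{Rh intergral condition c}) for the two hypersurfaces and subtracting, the metric terms cancel and one obtains a quadratic relation satisfied by $\hat D$, while subtracting the Codazzi equations shows that $\nabla^{\mathsf h_1}\hat D$ is totally symmetric, and $\phi^*\hat T_2=\hat T_1$ says precisely that $\hat D$ is trace free with respect to $\mathsf h_1$. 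The heart of the proof is to upgrade this to $\hat D\equiv0$, i.e. to show that $\phi$ also preserves the full cubic form. This is where I expect the main obstacle to lie, and where Schneider's rigidity theorem \cite{Schneider1} enters: an integral (Bochner-type) argument over the closed Riemannian manifold $(\mathbf I_{\mathbf F_1},\mathsf h_1)$, exploiting the trace-freeness, the Codazzi property, and the quadratic constraint coming from convexity, forces such a tensor to vanish. The hypothesis $n\ge3$, so that $\dim\mathbf I_{\mathbf F_1}\ge2$, should be used exactly at this step.

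Once $\phi^*\mathsf h_2=\mathsf h_1$ and $\phi^*\hat C_2=\hat C_1$ are established, the fundamental theorem of centroaffine differential geometry of hypersurfaces (\cite{SSV}) applies: an isometry of the affine metrics of two connected nondegenerate centroaffine hypersurfaces which also preserves the cubic form is the restriction of some $L\in GL(V_1,V_2)$, so $L$ realizes $\phi$ on $\mathbf I_{\mathbf F_1}$. It then remains only to extend by homogeneity: every nonzero $y\in V_1$ can be written $y=\mathbf F_1(y)\,y_0$ with $y_0\in\mathbf I_{\mathbf F_1}$, whence $f(y)=\mathbf F_1(y)\,f(y_0)=\mathbf F_1(y)\,L(y_0)=L(y)$, while $f(0)=0$ by continuity; therefore $f=L$ is linear.
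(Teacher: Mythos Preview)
The paper does not actually prove this theorem: it is quoted from \cite{Li1}, with only the remark that the proof relies on Schneider's rigidity theorem \cite{Schneider1}, and the paper later notes that an alternative proof appears in \cite{Cra2}. There is therefore no in-paper argument to compare against. That said, your outline is entirely consistent with the strategy the paper points to: restrict $f$ to a diffeomorphism $\phi$ of the (compact, convex) indicatrices, translate the hypotheses $\bar g_1=f^*\bar g_2$ and $\eta_1=f^*\eta_2$ into $\phi^*\mathsf h_2=\mathsf h_1$ and $\phi^*\hat T_2=\hat T_1$ via the identifications $\mathsf h=i^*\bar g$ and $\hat T=-\tfrac1{n-1}i^*\eta$ recorded in the paper, invoke Schneider's global rigidity result for closed locally strongly convex hypersurfaces to upgrade this to $\phi^*\hat C_2=\hat C_1$, and then conclude linearity from the fundamental theorem of centroaffine hypersurfaces plus homogeneity. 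Your identification of the Schneider step as the crux, and of compactness and $n\ge 3$ as the ingredients it consumes, matches what the paper signals.
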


In the case of dimension $n=2$, the equivalence theorem has been proved in \cite{BaoChernShen}.

\subsection{The Landsberg curvature and the Berwald curvature}
\

Let $M$ be a smooth manifold of dimension $n$. Set $TM_0=TM\setminus\{0\}$, where $0$ denotes the zero section of $TM$.  A Finsler structure $\mathbf{F}$ is a function on the tangent bundle $TM$ which is smooth on $TM_0$, such that $\mathbf{F}|_{T_xM}$ is a Minkowski norm restricted on each fiber $T_xM$, $\forall x\in M$. $\mathbf{F}$ is defined by a Riemannian metric if and only if the Minkowski norms on the fibers are induced by inner products.

For a Finsler manifold $(M,\mathbf{F})$, the tangent bundle of $TM_0$ admits a nature splitting
\begin{equation}\label{splitting}
  T(TM_0)=H(TM_0)\oplus V(TM_0),
\end{equation}
where $V(TM_0)$ is the distribution defined by the tangent spaces of the fibers of $TM_0$, and $H(TM_0)_{(x,y)}$ is isomorphic to $T_xM$ for any $(x,y)\in TM_0$. The splitting of $T(TM_0)$ defines a natural almost complex structure $J:T(TM_0)\to T(TM_0)$.

The fundamental tensor $g=g_{ij}dx^i\otimes dx^j$ defines a Euclidean metric on the horizontal subbundle $H(TM_0)$. The Chern connection $\nabla^{\rm Ch}$ is the unique linear connection on the Euclidean bundle $(H(TM_0),g)$, which is torsion free and almost metric compatible. In \cite{FL}, we proved that the Chern connection is just the Bott connection on $H(TM_0)$ in the theory of foliation (c.f. \cite{Zhang}), and the symmetrization of the Chern connection $ \widehat{\nabla }^{\rm Ch}$ is the Cartan connection. Also in \cite{FL} we introduced the data
\begin{align*}
H:=\widehat{\nabla}^{\rm Ch}-\nabla^{\rm Ch},\quad \eta={\rm tr}[H]~\in\Omega^1(TM_0),
\end{align*}
which are named the Cartan endomorphism and the Cartan-type form, respectively.
It is easy to show that the restriction of $\eta$ on the fibers of $TM_0$ are the Cartan forms of the corresponding Minkowski spaces.

\begin{remark}
In literature, Cartan from $I$ is defined as $I=\sum_{i,j,k}g^{jk}A_{ijk}dx^i$. It is easy to check $I=J^*\eta$, where $J^*$ denote the dual of the almost complex structure $J$ mentioned above. This is the reason why we call $\eta$ the Cartan-type form.
The differential forms $\eta$ and $I$ behavior differently. A simple computation shows that $dI=0$ implies that $M$ is a Riemannian manifold.
However there are non-Riemannian metrics such that $d\eta=0$ (c.f. \cite{SL}). The Cartan-type form $\eta$ is crucial for our study.
\end{remark}

Let $R^{\rm Ch}=\left(\nabla^{\rm Ch}\right)^2$  be the curvature of $\nabla^{\rm Ch}$.
From the torsion freeness, the curvature forms has no pure vertical differential form
\begin{align*}
R^{\rm Ch}=R+P,
\end{align*}
where the $(2,0)$ part $R$ is called the Chern-Riemann curvature and the $(1,1)$ part is called the Chern curvature.
The Landsberg curvature is defined as
$$\mathbf{L}:=-P(\mathbf{e}_n,\cdot).$$

If a Finsler manifold satisfies $P=0$ (or $\mathbf{L}=0$), then it is called a Berwald manifold (or Landsberg manifold).

If $\mathbf{L}=0$, using the explicit formulae of the connections and the curvature tensors under natural coordinate systems (c.f. \cite{BaoChernShen}. pp. 27-67), one can prove that the Berwald curvature satisfies
\begin{equation*}
  \mathbf{B}=P/\mathbf{F}.
\end{equation*}
In this case, the mean Berwald curvature or the $\mathbf{E}$-curvature equals to $\mathbf{F}^{-1}\tr P$.

\subsection{Basic properties of nonlinear parallel transport}
\

In this subsection, we review some facts of the nonlinear parallel transport. One refers to \cite{Aikou10,Bao,ChernShen} for details.

Let $(M,\mathbf{F})$ be a Finsler manifold of dimension $n$. Let $\sigma:[0,1]\rightarrow M$ be a smooth curve emanates form $\sigma(0)=p\in M$.
Without loss of generality, we can assume that $\sigma$ is contained in a local coordinate chart $(U;x^i)$ of $M$.
Let $(\sigma^1(t),\ldots,\sigma^n(t))$ be the local coordinates of $\sigma(t)$, $t\in[0,1]$.

A vector field along $\sigma$ is in fact a curve $\hat{\sigma}(t)=(\sigma(t),y(t))$ with $y(t)\in T_{\sigma(t)}M$, $t\in[0,1]$.
$\hat{\sigma}$ is called a nonlinear parallel vector filed, if it is a solution of the following ODE
\begin{align} \label{equation of parellel vector field}
\frac{dy^i}{dt}+\frac{d\sigma^j}{dt}\frac{\partial G^i}{\partial y^j}(\sigma(t),y(t))=0,\quad i=1,\ldots,n,
\end{align}
with the initial value $(\sigma(0),y(0))=(p,y)$, where the coefficients $G^i$ are defined by
\begin{align}\label{G}
G^{i}=\frac{1}{4}g^{ij}\left(\left[F^2\right]_{y^{j}x^{k}}y^{k}-\left[F^2\right]_{x^{j}}\right),\quad i=1,\ldots, n,
\end{align}
which are often called the spray coefficients. In this case, the curve $\hat{\sigma}(t)$ is called the horizontal lift of $\sigma(t)$ started from $y(0)=y$.

\begin{definition}
For any $t_0\in[0,1]$, the mapping $P_{\sigma,t_0}:T_pM\setminus\{0\}\rightarrow T_{\sigma(t_0)}M\setminus\{0\}$ is defined by
\begin{align*}
P_{\sigma,t_0}(y):=y(t_0), \quad \forall ~y\in T_pM\setminus\{0\},
\end{align*}
where $y(t)$ is the nonlinear parallel vector field along $\sigma$ with $y(0)=y$. $P_{\sigma,t_0}$ is called the nonlinear parallel transport  along $\sigma$ from $0$ to $t_0$.
\end{definition}

\begin{lem}[\cite{Aikou10,Bao,ChernShen}]\label{lemmma basic pro of npt}
Let $\sigma:[0,1]\rightarrow M$ be a smooth curve
such that the nonlinear parallel translation $P_{\sigma,t}$ is
defined for $\forall ~t\in [0,1]$. Let $p=\sigma(0)$, then
the map $P_{\sigma,t}:T_pM\setminus\{0\}\rightarrow
T_{\sigma(t)}M\setminus\{0\}$ is a norm preserving diffeomorphism,
and satisfies
\begin{align}\label{radial linear of P}
P_{\sigma,t}(\lambda y)=\lambda P_{\sigma,t}(y), \quad \forall ~\lambda>0, \forall~y\in T_pM\setminus\{0\}.
\end{align}
Therefore we can continually extend $P_{\sigma,t_0}$ to $T_pM$ by setting $P_{\sigma,t_0}(0)=0$.
%Let $W\subset T_pM$ be a cone with the origin as its vertex. If $\mathbf{F}|_{W(t)}$ is analytic for $\forall t\in [0,\epsilon]$, then $P_{\sigma,t}|_{W}$ is analytic, where $W(t):=P_{\sigma,t}(W)$.
\end{lem}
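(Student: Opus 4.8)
The plan is to obtain the three assertions — norm preservation, positive homogeneity, and the diffeomorphism property — from elementary ODE theory applied to (\ref{equation of parellel vector field}) together with the degree-$2$ homogeneity of the spray coefficients $G^i$; the single genuinely computational ingredient is the classical identity saying that $\mathbf{F}$ is constant along horizontal curves. First I would prove
\begin{align*}
[F^2]_{y^i}\,\frac{\partial G^i}{\partial y^k}=[F^2]_{x^k}\qquad\bigl(\text{equivalently }\ 2y_i\,\tfrac{\partial G^i}{\partial y^k}=[F^2]_{x^k},\ \ y_i:=g_{ij}y^j\bigr),
\end{align*}
which comes from differentiating $4g_{li}G^i=[F^2]_{y^lx^m}y^m-[F^2]_{x^l}$ (a rewriting of (\ref{G})) in $y^k$, contracting with $y^l$, discarding the term $y^l(\partial g_{li}/\partial y^k)G^i$ by the Cartan identity $A_{ijk}y^i=0$, and reducing the $y^k$-derivative of $[F^2]_{y^lx^m}y^m-[F^2]_{x^l}$ via the Euler relations $[F^2]_{y^iy^j}y^i=[F^2]_{y^j}$ and $y^i[F^2]_{y^ix^k}=2[F^2]_{x^k}$. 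Granting this, along any horizontal lift $\hat\sigma(t)=(\sigma(t),y(t))$ of $\sigma$ one gets from (\ref{equation of parellel vector field})
\begin{align*}
\frac{d}{dt}\bigl(F^2(\sigma(t),y(t))\bigr)=[F^2]_{x^k}\dot\sigma^k+[F^2]_{y^i}\dot y^i=[F^2]_{x^k}\dot\sigma^k-[F^2]_{y^i}\,\frac{\partial G^i}{\partial y^k}\,\dot\sigma^k=0 ,
\end{align*}
so $\mathbf{F}(\sigma(t),y(t))\equiv\mathbf{F}(p,y)$. In particular a horizontal lift issuing from $y\neq 0$ never reaches the zero section, so $P_{\sigma,t}$ really maps $T_pM\setminus\{0\}$ into $T_{\sigma(t)}M\setminus\{0\}$, and it is norm preserving.

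For positive homogeneity, each $G^i$ is homogeneous of degree $2$ in $y$, hence each $\partial G^i/\partial y^j$ is homogeneous of degree $1$; therefore, if $y(t)$ solves (\ref{equation of parellel vector field}) with $y(0)=y$ then for every $\lambda>0$ the curve $\lambda y(t)$ solves the same system with initial value $\lambda y$, since
\begin{align*}
\frac{d(\lambda y^i)}{dt}+\dot\sigma^j\,\frac{\partial G^i}{\partial y^j}(\sigma(t),\lambda y(t))=\lambda\Bigl(\dot y^i+\dot\sigma^j\,\frac{\partial G^i}{\partial y^j}(\sigma(t),y(t))\Bigr)=0 .
\end{align*}
By uniqueness of solutions of (\ref{equation of parellel vector field}) this gives $P_{\sigma,t}(\lambda y)=\lambda P_{\sigma,t}(y)$. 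For the diffeomorphism assertion I would quote the standard theorem on smooth dependence of ODE solutions on their initial data: on $T_pM\setminus\{0\}$ the assignment $y\mapsto y(t)$ is smooth; running (\ref{equation of parellel vector field}) backwards along the reversed curve $s\mapsto\sigma(t-s)$ produces a smooth two-sided inverse (by uniqueness, transporting forward along $\sigma$ and then back returns to the starting vector), which by the norm-preservation computation is again defined on all of $[0,t]$ and avoids the zero section, and whose image exhausts $T_{\sigma(t)}M\setminus\{0\}$. Hence $P_{\sigma,t}$ is a diffeomorphism of $T_pM\setminus\{0\}$ onto $T_{\sigma(t)}M\setminus\{0\}$.

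Finally, putting $P_{\sigma,t}(0):=0$ yields a continuous extension to all of $T_pM$: for $y\neq 0$ write $y=\mathbf{F}(p,y)\,\omega$ with $\omega$ on the $\mathbf{F}$-unit sphere of $T_pM$; positive homogeneity gives $P_{\sigma,t}(y)=\mathbf{F}(p,y)\,P_{\sigma,t}(\omega)$, and $P_{\sigma,t}$ is bounded on the compact unit sphere, so $P_{\sigma,t}(y)\to 0$ as $y\to 0$. The only substantive point in all of this is the horizontal-constancy identity $[F^2]_{y^i}\,\partial G^i/\partial y^k=[F^2]_{x^k}$, whose verification is a routine homogeneity computation; everything else is bookkeeping with existence, uniqueness and smooth dependence for (\ref{equation of parellel vector field}) — the one mild point being that the transport must exist on all of $[0,1]$, which is the running hypothesis and is in any case forced by norm preservation confining $y(t)$ to a compact sphere bundle over the compact set $\sigma([0,1])$.
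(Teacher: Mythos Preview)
Your argument is correct and is the standard one: the paper itself does not prove this lemma but simply cites it from \cite{Aikou10,Bao,ChernShen}, and the proof you sketch---the horizontal-constancy identity $[F^2]_{y^i}\,\partial G^i/\partial y^k=[F^2]_{x^k}$ for norm preservation, degree-$1$ homogeneity of $\partial G^i/\partial y^j$ for (\ref{radial linear of P}), and smooth dependence on initial data plus time-reversal for the diffeomorphism property---is exactly what one finds in those references. There is nothing to add.
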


Let $T$ be a vertical covariant tensor field on $TM_0$, i.e. $T\equiv 0~({\rm mod}~\delta y^{1},\ldots,\delta y^{n})$, where $\{\delta y^{1},\ldots,\delta y^{n}\}$ is the local basis of $V^*(TM_0)$ with respect to the splitting (\ref{splitting}).
$T$ is called to be preserved by parallel transport along the curve $\sigma$ if
\begin{equation*}
  P_{\sigma,t}^*T_{\sigma(t)}=T_{p}\quad {\rm for}~\forall~t\in[0,\epsilon],
\end{equation*}
where $T_x=i_x^*T$ denotes the restriction of $T$ on $T_xM\setminus\{0\}$, $i_x:T_xM\hookrightarrow TM$ is the embedding mapping for any $x\in M$.

A vertical tensor is preserved by the parallel transports has the following  infinitesimal description.
\begin{lem}[\cite{Li1}]\label{T}
Let $T$ be a vertical covariant tensor field on $TM_0$.
Then $T$ is preserved by the parallel transports  if and only if
\begin{equation*}
  \mathcal{L}_{X}T\equiv 0~({\rm mod}~dx^1,\ldots,dx^n),
\end{equation*}
for any horizontal vector field $X\in\Gamma(H(TM_0))$.
\end{lem}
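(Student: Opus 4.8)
The plan is to prove the Lie-derivative characterization of Lemma~\ref{T} by unpacking what $P_{\sigma,t}^*T_{\sigma(t)}=T_p$ means at the infinitesimal level and matching it against the flow generated by horizontal vector fields. The key observation is that nonlinear parallel transport along a curve $\sigma$ is exactly the flow of the horizontal lift of $\dot\sigma$: if $X\in\Gamma(H(TM_0))$ is the horizontal vector field whose projection to $M$ is $\dot\sigma$, then the defining ODE~(\ref{equation of parellel vector field}) for the parallel vector field is precisely the statement that the integral curves of $X$ project to $\sigma$ and carry the fiber coordinate $y$ along according to $P_{\sigma,t}$. Thus the horizontal lift $\hat\sigma(t)=(\sigma(t),y(t))$ is an integral curve of $X$, and the family of maps $\{P_{\sigma,t}\}$ is realized by the local flow $\{\varphi_t^X\}$ of $X$ on $TM_0$.

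First I would make this identification explicit. Given a horizontal field $X$, let $\sigma$ be an integral curve of its base projection through $p$ and let $\varphi_t^X$ denote the flow of $X$. I would check from (\ref{G}) and (\ref{equation of parellel vector field}) that $\varphi_t^X(p,y)=(\sigma(t),P_{\sigma,t}(y))$, so that $\varphi_t^X$ restricted to the fiber over $p$ equals $i_{\sigma(t)}\circ P_{\sigma,t}$. Next, for a vertical covariant tensor $T$ (one satisfying $T\equiv0\ (\mathrm{mod}\ \delta y^1,\dots,\delta y^n)$), I would relate the pullback condition to the Lie derivative. The standard flow formula gives
\begin{align*}
\frac{d}{dt}\Big|_{t=0}(\varphi_t^X)^*T=\mathcal{L}_X T.
\end{align*}
Restricting to the fiber $T_pM$ via $i_p$ and using the flow identity above, the condition $P_{\sigma,t}^*T_{\sigma(t)}=T_p$ for all small $t$ is equivalent to $\frac{d}{dt}\big|_{t=0}i_p^*(\varphi_t^X)^*T=0$, which upon differentiating is $i_p^*(\mathcal{L}_X T)=0$. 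Running this for every choice of $p$, $y$ and every horizontal $X$ yields that $\mathcal{L}_X T$ has no purely vertical component, i.e.\ $\mathcal{L}_X T\equiv 0\ (\mathrm{mod}\ dx^1,\dots,dx^n)$.

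The two implications are then handled symmetrically. For the ``only if'' direction, if $T$ is preserved along every curve, then the $t$-derivative at $t=0$ of the preservation identity gives the vanishing of the vertical part of $\mathcal{L}_X T$ for every horizontal $X$ through every point, since $X$ and its integral curves range over all admissible data. For the ``if'' direction, I would argue that $i_p^*(\mathcal{L}_X T)=0$ for all $X$ forces $\frac{d}{dt}i_p^*(\varphi_t^X)^*T=0$ not only at $t=0$ but along the whole curve: reparametrizing, the derivative at time $t_0$ is computed by the same formula applied at the shifted base point $\sigma(t_0)$ with the shifted horizontal field, which again vanishes by hypothesis. Integrating this vanishing derivative shows $i_p^*(\varphi_t^X)^*T$ is constant in $t$, hence $P_{\sigma,t}^*T_{\sigma(t)}=T_p$, which is the definition of preservation.

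The main obstacle is bookkeeping the distinction between the full Lie derivative $\mathcal{L}_X T$ on $TM_0$ and its restriction $i_p^*$ to a single fiber, together with the fact that the flow $\varphi_t^X$ moves the base point, so that pulling back to a fixed fiber requires the decomposition of $T(TM_0)$ in (\ref{splitting}) and the identification $H(TM_0)_{(x,y)}\cong T_xM$. Concretely, I must verify that the ``$\mathrm{mod}\ dx^1,\dots,dx^n$'' ambiguity in $\mathcal{L}_X T$ is exactly the part that gets annihilated by $i_p^*$, so that the two infinitesimal conditions coincide; this is where the horizontal/vertical splitting and the vertical nature of $T$ must be used carefully. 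The remaining computations—the flow identity, the derivative formula, and the reparametrization at $t_0$—are routine once this matching is set up correctly.
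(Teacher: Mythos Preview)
The paper does not supply its own proof of this lemma: it is stated with attribution to \cite{Li1} and then used without argument. Your proposal therefore cannot be compared against a proof in the present paper. That said, your approach---identifying nonlinear parallel transport along $\sigma$ with the flow of a horizontal vector field, invoking the flow formula $\frac{d}{dt}(\varphi_t^X)^*T=\mathcal{L}_XT$, and observing that $i_p^*$ kills exactly the $dx^i$-components so that the ``$\mathrm{mod}\ dx^1,\dots,dx^n$'' condition matches the fiber-restriction condition---is the natural and correct strategy, and is essentially what one finds in \cite{Li1}.

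One point deserves a bit more care than you indicate. A general horizontal field $X\in\Gamma(H(TM_0))$ need not project to a well-defined vector field on $M$ (its horizontal part at $(x,y)$ may depend on $y$), so there is no single curve $\sigma$ whose parallel transport realizes the full flow $\varphi_t^X$ on the fiber $T_pM$. What is true is that \emph{through each point} $(p,y)$ the integral curve of $X$ projects to some base curve $\sigma_y$, and along it the fiber coordinate evolves by parallel transport. This is enough for the ``only if'' direction pointwise, but when you differentiate $(\varphi_t^X)^*T$ and then restrict via $i_p^*$, the differential $d\varphi_t^X$ acts on nearby vertical vectors, where the base curves vary. The cleanest fix is to verify the condition only for horizontal lifts of genuine vector fields on $M$ (which suffices, since such lifts span $H(TM_0)$ at every point and the condition $\mathcal{L}_XT\equiv 0\ (\mathrm{mod}\ dx^i)$ is $C^\infty(TM_0)$-linear in $X$ once $T$ is vertical); for those $X$ the flow really is $P_{\sigma,t}$ fiberwise and your argument goes through verbatim.
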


We modify the Cartan endomorphism $H$ to a vertical tensor field on $TM_0$ as follows
\begin{align*}
A=A_{ijk}\delta y^i\otimes \delta y^j\otimes \delta y^k.
\end{align*}
For any $p\in M$, the restriction of $\bar{g}$ and $A$ on each fiber $T_pM\setminus\{0\}$ gives the fundamental form and Cartan tensor of the Minkowski space $(T_pM, \mathbf{F}_{T_pM})$, respectively.

By Lemma \ref{T} and direct calculations, we obtain the geometric elaboration of some non-Riemannian curvatures using parallel transports.
\begin{lem}[\cite{Ichijyo78,Li1}]\label{lemma curvature parallel}
Let $(M,\mathbf{F})$ be a Finsler manifold. The following assertions are true:
\begin{enumerate}
  \item[(a)] $\mathbf{L}=0$ if and only if $\bar{g}$ is preserved by parallel transports.
  \item[(b)] $P=0$ if and only if $A$ is preserved by parallel transports.
  In this case, parallel transports are linear which is equivalent to say the spray coefficient $G^i$ is quadratic in $y$, for $i=1,\ldots,n$.
  \item[(c)] $\tr P=0$ if and only if $\eta$ is preserved by parallel transports.
\end{enumerate}
\end{lem}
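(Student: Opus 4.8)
\medskip
\noindent\textbf{Proof proposal.} The plan is to prove the three equivalences by one template: reduce to the infinitesimal criterion of Lemma~\ref{T}, then carry out the ``direct calculation'' in an adapted frame. First I would note that, since each of the tensors involved is \emph{vertical}, any of its contractions with a horizontal vector vanishes; hence for a horizontal field written as $fX$ the Leibniz-type corrections in $\mathcal{L}_{fX}T$ all drop out and $\mathcal{L}_{fX}T=f\,\mathcal{L}_XT$, so the condition ``$\mathcal{L}_XT\equiv0\ (\mathrm{mod}~dx^1,\dots,dx^n)$ for all horizontal $X$'' of Lemma~\ref{T} only needs to be tested on the horizontal frame $X=\delta/\delta x^k$, $k=1,\dots,n$.

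Next I would compute $\mathcal{L}_{\delta/\delta x^k}$ on the adapted coframe $\{dx^i,\delta y^i\}$, where $\delta y^i=dy^i+N^i_j\,dx^j$ and $N^i_j=\partial G^i/\partial y^j$. A short computation with the structure equations of the nonlinear connection gives $\mathcal{L}_{\delta/\delta x^k}dx^i=0$ and $\mathcal{L}_{\delta/\delta x^k}\delta y^i\equiv-\Gamma^i_{km}\,\delta y^m\ (\mathrm{mod}~dx)$, where $\Gamma^i_{jk}:=\partial N^i_j/\partial y^k=\partial^2G^i/\partial y^j\partial y^k$ are the Berwald connection coefficients. Hence, for any vertical tensor $T=T_{i_1\cdots i_r}\,\delta y^{i_1}\otimes\cdots\otimes\delta y^{i_r}$,
\[
\mathcal{L}_{\delta/\delta x^k}T\ \equiv\ (\nabla^{B}_kT)_{i_1\cdots i_r}\,\delta y^{i_1}\otimes\cdots\otimes\delta y^{i_r}\qquad(\mathrm{mod}~dx),
\]
where $\nabla^{B}$ denotes horizontal covariant differentiation with respect to $\Gamma$. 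Thus ``$T$ is preserved by all parallel transports'' is equivalent to ``$\nabla^{B}T=0$ in every horizontal direction''.

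It then remains to feed in $T=\bar{g},\,A,\,\eta$. For (a), the non-metricity of $\Gamma$ is exactly the Landsberg tensor, $\nabla^{B}_k g_{ij}=-2\mathbf{F}^{-1}L_{ijk}$ (equivalently, $\Gamma$ differs from the Chern connection by $\mathbf{F}^{-1}$ times the Landsberg tensor, and the Chern connection is horizontally metric compatible; see \cite{BaoChernShen}), so $\nabla^{B}\bar{g}=0\iff\mathbf{L}=0$. For (c), using the fibrewise identity $\eta=\tfrac{\mathbf{F}}{2}\,d\log\det(g_{ij})$ of (\ref{eta}) together with $\delta\mathbf{F}/\delta x^k=0$ (equivalently, the norm-preservation in Lemma~\ref{lemmma basic pro of npt}), a short computation identifies $\nabla^{B}\eta$, up to a universal constant, with $\tr P$ evaluated on the given horizontal vector, so $\nabla^{B}\eta=0\iff\tr P=0$. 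For (b), a further computation---differentiating the relation of (a) in $y$ (tracking $[\partial/\partial y^m,\,\delta/\delta x^k]=-\Gamma^p_{km}\,\partial/\partial y^p$ and $\delta\mathbf{F}/\delta x^k=0$), or appealing to the structural identity linking the horizontal derivative of the Cartan tensor with the Chern curvature---identifies the obstruction as $P$ itself: $\nabla^{B}A=0$ in all horizontal directions $\iff P=0$. For the ``in this case'' clauses: $P=0$ forces $\mathbf{L}=0$, whence $\mathbf{B}=P/\mathbf{F}=0$, i.e.\ each $G^i$ is quadratic in $y$; this makes the parallel-transport ODE (\ref{equation of parellel vector field}) linear, so every $P_{\sigma,t}$ is a linear isometry of Minkowski spaces, which in particular preserves the canonical tensor $A$.

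I expect part (b) to be the one genuinely delicate step: one must check that preservation of the \emph{full} Cartan tensor $A$ forces the \emph{full} Chern curvature $P$ (equivalently the Berwald curvature $\mathbf{B}$) to vanish, rather than only a contraction of it as happens for $\bar{g}$ (the Landsberg tensor) and for $\eta$ (the trace $\tr P$); this is precisely where the non-commutation bookkeeping between $\partial/\partial y$ and $\delta/\delta x$ enters. Everything else is either an immediate trace of that computation or a direct unwinding of the definitions of Section~1.
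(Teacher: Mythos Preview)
Your proposal is correct and follows exactly the approach the paper indicates: the paper does not give a detailed proof of this lemma but simply states that it follows ``by Lemma~\ref{T} and direct calculations'' and cites \cite{Ichijyo78,Li1}, which is precisely the template you carry out---reduce to the infinitesimal criterion and compute $\mathcal{L}_{\delta/\delta x^k}$ on $\bar g$, $A$, $\eta$ as Berwald horizontal covariant derivatives. Your identification of the delicate point in (b) is apt, and your outline of how the commutator $[\partial/\partial y^m,\delta/\delta x^k]$ produces the Berwald curvature terms is the right mechanism; the remaining bookkeeping is routine and matches what the cited references supply.
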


By Lemma \ref{lemmma basic pro of npt} and Lemma \ref{lemma curvature parallel}, we can ascribe the unicorn problem to an equivalence problem of Minkowski spaces and finally to a rigidity problem in affine geometry.
Based on this observation and by using Theorem \ref{theo equivalence}, the following result is proved.
\begin{theo}[\cite{Li1}]\label{OK}
  Let $(M,\mathbf{F})$ be a Landsberg manifold of dimension $n\geq3$. If the mean Berwald curvature $\mathbf{E}=0$, then $P=0$.
\end{theo}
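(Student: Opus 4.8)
The plan is to realize the hypotheses of the equivalence theorem (Theorem~\ref{theo equivalence}) on every pair of tangent spaces of $M$ by means of nonlinear parallel transport, and then to read off $P=0$ from Lemma~\ref{lemma curvature parallel}. Thus the argument is essentially a bookkeeping of which vertical tensors are preserved by parallel transports, feeding into the affine-rigidity input provided by Theorem~\ref{theo equivalence}.

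First, since $M$ is Landsberg we have $\mathbf{L}=0$, so Lemma~\ref{lemma curvature parallel}(a) gives that the fundamental tensor $\bar{g}$ is preserved by all parallel transports. Moreover, under $\mathbf{L}=0$ one has $\mathbf{B}=P/\mathbf{F}$ and hence $\mathbf{E}=\mathbf{F}^{-1}\tr P$; therefore the hypothesis $\mathbf{E}=0$ forces $\tr P=0$, and Lemma~\ref{lemma curvature parallel}(c) then shows that the Cartan-type form $\eta$ is likewise preserved by all parallel transports. Here the Landsberg hypothesis is used in an essential way, since without it $\mathbf{E}$ is not simply $\mathbf{F}^{-1}\tr P$.

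Next, assuming $M$ connected, fix $p,q\in M$ and join them by a smooth curve $\sigma:[0,1]\to M$ with $\sigma(0)=p$, $\sigma(1)=q$. Because $\mathbf{F}$ is constant along horizontal lifts, the solution of the parallel-transport ODE~(\ref{equation of parellel vector field}) stays in a compact subset of $TM_0$ lying over the compact set $\sigma([0,1])$, so $P_{\sigma,t}$ is defined for all $t\in[0,1]$ and Lemma~\ref{lemmma basic pro of npt} applies: $f:=P_{\sigma,1}:T_pM\setminus\{0\}\to T_qM\setminus\{0\}$ is a norm-preserving diffeomorphism with $f(tv)=tf(v)$ for all $t>0$, extended by $f(0)=0$. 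By the preceding paragraph, $f^*\bar{g}_q=\bar{g}_p$ and $f^*\eta_q=\eta_p$, where $\bar{g}_x,\eta_x$ denote the restrictions to $T_xM$. Since $n\geq3$, Theorem~\ref{theo equivalence} applies and $f$ is \emph{linear}.

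Finally, a linear map $f$ satisfying $\mathbf{F}|_{T_qM}\circ f=\mathbf{F}|_{T_pM}$ intertwines every tensor built from the derivatives of $\mathbf{F}^2$; in particular $f^*A_q=A_p$ for the Cartan tensors. As $p$, $q$ and $\sigma$ were arbitrary, the vertical tensor $A$ is preserved by all parallel transports, and Lemma~\ref{lemma curvature parallel}(b) gives $P=0$. (Equivalently, linearity of every parallel transport says that the spray coefficients $G^i$ are quadratic in $y$, i.e.\ $M$ is Berwald.) The genuinely hard ingredient is the affine-rigidity statement behind Theorem~\ref{theo equivalence} — ultimately Schneider's theorem — which we may assume; within the present deduction the only points needing care are the reduction $\mathbf{E}=0\Rightarrow\tr P=0$, which relies on the Landsberg hypothesis, and the observation that one and the same parallel transport simultaneously respects $\bar{g}$ and $\eta$, so that the fiberwise application of the global rigidity theorem is legitimate.
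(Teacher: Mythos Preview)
Your proof is correct and follows essentially the same route the paper outlines for this cited result: use $\mathbf{L}=0$ and $\mathbf{E}=0$ (the latter yielding $\tr P=0$ via $\mathbf{E}=\mathbf{F}^{-1}\tr P$ under the Landsberg hypothesis) to see that parallel transports preserve both $\bar{g}$ and $\eta$, then invoke Theorem~\ref{theo equivalence} to conclude linearity, hence $P=0$ by Lemma~\ref{lemma curvature parallel}(b). The only cosmetic difference is that you spell out the completeness of parallel transport along $\sigma$ and the passage from linearity to $f^*A_q=A_p$, which the paper leaves implicit.
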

It is proved in \cite{LiZ}, $\mathbf{e}:=\tr \mathbf{E}=0$ implies $\mathbf{E}=0$ itself. Hence
\begin{theo}[\cite{Cra2,LiZ}]\label{better}
  Let $(M,\mathbf{F})$ be a Landsberg manifold of dimension $n\geq3$. If the Berwald scalar curvature $\mathbf{e}=0$, then $P=0$.
\end{theo}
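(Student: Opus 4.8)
The plan is to factor the statement through Theorem~\ref{OK}. A Landsberg manifold has $\mathbf{L}=0$, so the only thing missing is the purely fibrewise implication ``$\mathbf{e}=0\Rightarrow\mathbf{E}=0$'', after which Theorem~\ref{OK} (valid for $n\ge 3$) gives $P=0$ at once. The sentence preceding the statement records that this implication is exactly what \cite{LiZ} proves, so strictly speaking nothing more is needed; but since that is where all the substance lies, here is how I would establish it inside the present framework, and it is also where I expect the main obstacle.

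Fix $p\in M$ and work in the Minkowski space $(T_pM,\mathbf{F}_p)$. Its indicatrix $\mathbf{I}=\mathbf{I}_{\mathbf{F}_p}$ is a compact convex hypersurface, diffeomorphic to $S^{n-1}$ and hence simply connected since $n\ge 3$; it carries the centroaffine data $\mathsf{h}=i^*\bar g$, cubic form $\hat C$ and Tchebychev form $\hat T=-\tfrac1{n-1}i^*\eta$, and $\bar g$ exhibits $T_pM\setminus\{0\}$ as the metric cone $dr^2+r^2\mathsf{h}$ over $(\mathbf{I},\mathsf{h})$ with $r=\mathbf{F}_p$. With the standard convention, $E_{ij}=\tfrac12\,\partial_{y^i}\partial_{y^j}S$, where $S:=\partial G^m/\partial y^m$ is positively homogeneous of degree $1$; put $\bar S:=S|_{\mathbf{I}}$. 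Writing $\mathbf{e}=g^{ij}E_{ij}$ out in the cone coordinates and using the centroaffine structure equations \eqref{Rh intergral condition c} together with $\nabla=\nabla^{\mathsf{h}}+C$, one finds that, up to a positive factor, $\mathbf{e}$ equals $\Delta_{\mathsf{h}}\bar S-(n-1)\langle T,\nabla\bar S\rangle_{\mathsf{h}}+(n-1)\bar S$ on $\mathbf{I}$, where $T$ is the Tchebychev field. Since $d\hat T=0$ (integrability of the cubic form) and $\mathbf{I}$ is simply connected, $\hat T=d\psi$, so the operator $\bar S\mapsto\Delta_{\mathsf{h}}\bar S-(n-1)\langle T,\nabla\bar S\rangle_{\mathsf{h}}$ equals $e^{(n-1)\psi}\operatorname{div}\!\bigl(e^{-(n-1)\psi}\nabla(\cdot)\bigr)$, hence is self-adjoint for the weighted measure $d\mu=e^{-(n-1)\psi}\,dV_{\mathsf{h}}$. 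Thus $\mathbf{e}=0$ says precisely that $\bar S$ is an eigenfunction of this weighted Laplacian with eigenvalue $n-1$.

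What remains is a Lichnerowicz--Obata type rigidity for the compact centroaffine indicatrix: its $(n-1)$-eigenfunctions above are exactly the restrictions to $\mathbf{I}$ of linear functions on $T_pM$, for all of which the homogeneous extension $S=r\bar S$ satisfies $\partial_{y^i}\partial_{y^j}S\equiv 0$, i.e.\ $\mathbf{E}=0$. One inclusion is trivial (a linear $S$ gives $\mathbf{E}=0$, hence $\mathbf{e}=0$); the reverse inclusion is the real content and is where I expect all the difficulty to sit. I would attack it by multiplying the eigenequation by $\bar S$, integrating against $d\mu$, integrating by parts, and then using the total symmetry of $\hat C$, the integrability relation \eqref{Rh intergral condition c} and the convexity $\mathsf{h}>0$ to force the trace-free part of the relevant Hessian to vanish — the same kind of rigidity phenomenon that underlies Theorem~\ref{theo equivalence}, and exactly what \cite{LiZ} supplies. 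Granting $\mathbf{E}=0$, Theorem~\ref{OK} then applies (the hypothesis $n\ge 3$ being used there through Theorem~\ref{theo equivalence}) and yields $P=0$, which is the assertion.
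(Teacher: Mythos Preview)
Your reduction is exactly what the paper does: the theorem is stated immediately after the line ``It is proved in \cite{LiZ}, $\mathbf{e}:=\tr \mathbf{E}=0$ implies $\mathbf{E}=0$ itself. Hence'', and no further argument is given here --- the implication $\mathbf{e}=0\Rightarrow\mathbf{E}=0$ is imported as a black box from \cite{LiZ}, after which Theorem~\ref{OK} finishes the job. The paper also records, in the paragraph that follows, that \cite{Cra2} supplies an independent proof via covariant differentiation, but again does not reproduce it.

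Your sketch of \emph{how} to prove the implication therefore goes beyond anything in the paper. The architecture is sound and is indeed in the spirit of \cite{LiZ}: the exactness of $\hat T$ on the indicatrix (which follows from \eqref{eta}, since $i^*\eta=\tfrac12\,d\bigl(i^*\log\det(g_{ij})\bigr)$) lets you write the relevant second-order operator in divergence form against a weighted measure, and then a Bochner/Obata-type argument on the compact convex indicatrix forces the Hessian rigidity. Two cautions if you pursue this in detail: first, $g^{ij}\partial_{y^i}\partial_{y^j}$ is not the rough Laplacian of $\bar g$, so the passage to your displayed operator on $\mathbf{I}$ must absorb the Cartan-tensor correction terms carefully before the weighted-divergence form is visible; second, the Obata step you correctly flag as the crux really does need the curvature input from \eqref{Rh intergral condition c} (which bounds $\mathrm{Ric}_{\mathsf h}$ from below modulo cubic-form terms) to close the integration-by-parts inequality --- that is precisely the computation carried out in \cite{LiZ}, and it is where the convexity and compactness of $\mathbf{I}$ are genuinely used.
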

Inspired by \cite{Li1}, Crampin obtained a global result for Minkowski norms, which is equivalent to a special case of Satz 3.1 in \cite{Schneider1}. Then Crampin was able to prove Theorem \ref{better} by using covariant differentiation in Finsler geometry in \cite{Cra2}. Another proof of Theorem \ref{theo equivalence} was also given in \cite{Cra2}.

%It is well known that the parallel transports are linear if and only if the ``h-v'' curvature $P$ of the Chern connection vanishing, which is equivalent to say the spray coefficient $G^i$ is quadratic in $y$, for $i=1,\ldots,n$.
%\begin{proof}
%By the definition of the spray coefficients (\ref{G}), the solutions of the ODE (\ref{equation of parellel vector field})
%are smoothly dependent on the initial values. Thus $P_{\sigma,t}$ is a diffeomorphism for $\forall t\in [0,\epsilon]$.

%As the tangent vector of the curve $\hat{\sigma}(t)=(\sigma(t),y(t))$ is horizontal, i.e., $\dot{\hat{\sigma}}=\hat{\dot{\sigma}},$
%then the norm preserving property follows form the basic fact $\frac{\delta}{\delta x^i}F=0$. Uniqueness theorem of ODE and (\ref{hom of partial g}) imply (\ref{radial linear of P}).
%\end{proof}

\section{Some properties of $(\alpha,\beta)$-norms}

\subsection{$(\alpha,\beta)$-norms}
\

Let $\boldsymbol{\alpha}$ be a Euclidean norm, $\boldsymbol{\beta}$ a linear functional on $V$. Let $b:=\|\boldsymbol{\beta}\|_{\boldsymbol{\alpha}}$ be the length of $\boldsymbol{\beta}$ with respect to $\boldsymbol{\alpha}$. It is clear that $\frac{\boldsymbol{\beta}(y)}{\boldsymbol{\alpha}(y)}$
is positively homogenous of degree 0 and has range $[-b,b]$.  Let $\phi=\phi(s)$ be a $C^{\infty}$ positive function on an open interval $I$, such that $[-b,b]\subseteq I$. An $(\alpha,\beta)$-functional is defined as
\begin{align}\label{alpha beta norm}
\mathbf{F}=\boldsymbol{\alpha}\cdot\phi\left(\frac{\boldsymbol{\beta}}{\boldsymbol{\alpha}}\right).
\end{align}
Fixing an orthonormal basis $\{e_1,\ldots,e_n\}$ of $\boldsymbol{\alpha}$, for any $y=y^ie_i\in V_0$, we have
$$\boldsymbol{\alpha}(y)=\alpha(y^1,\ldots,y^n)=\sqrt{\delta_{ij}y^iy^j}=|y|,\quad \quad \boldsymbol{\beta}(y)=b_iy^i,$$
and
$$\frac{\boldsymbol{\beta}(y)}{\boldsymbol{\alpha}(y)}=\frac{b_iy^i}{|y|}=:s(y^1,\ldots,y^n).$$
Therefore the functional (\ref{alpha beta norm}) is expressed as
\begin{align}\label{alpha beta norm'}
\mathbf{F}(y)=\alpha\cdot\phi(s)=|y|\phi\left(\frac{b_iy^i}{|y|}\right).
\end{align}

By definition, the functional (\ref{alpha beta norm}) satisfies (i) and (ii) on $V$. The positive definite condition of (\ref{alpha beta norm}) is formulated as follows.
\begin{lem}[\cite{Shen09,ChernShen,YuZh}]\label{Lemma 1}
When $n=2$, the functional (\ref{alpha beta norm}) defines a Minkowski norm on $V$ if and only if
\begin{align}\label{conditions phi form norm n=2}
      (\phi-s\phi')+(b^2-s^2)\phi''>0,
\end{align}
holds for any $s\in[-b,b]$.

When $n\geq3$, the functional (\ref{alpha beta norm}) defines a Minkowski norm on $V$ if and only if
\begin{align}\label{conditions phi form norm}
  \phi-s\phi'>0,\quad  (\phi-s\phi')+(b^2-s^2)\phi''>0,
\end{align}
hold for any $s\in[-b,b]$.
Moreover, the determinate of $(g_{ij})$ is given by
\begin{align}\label{det}
\det{(g_{ij})}=\phi^{n+1}(\phi-s\phi')^{n-2}\left[(\phi-s\phi')+(b^2-s^2)\phi''\right].
\end{align}
\end{lem}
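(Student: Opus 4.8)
The plan is to reduce the strong-convexity of $\mathbf{F}=\boldsymbol{\alpha}\,\phi(\boldsymbol{\beta}/\boldsymbol{\alpha})$ to a manageable algebraic condition by exploiting the special structure of the Hessian of $[\mathbf{F}]^2$. First I would fix the orthonormal basis $\{e_1,\dots,e_n\}$ of $\boldsymbol{\alpha}$ as in the excerpt, so $\alpha=|y|$ and $s=s(y)=b_iy^i/|y|$, and compute the first and second derivatives of $s$ with respect to $y^i$. A direct calculation gives $s_{y^i}=\alpha^{-1}(b_i-s\,y_i/\alpha)$ where $y_i:=\delta_{ij}y^j$, and then one differentiates $[\mathbf{F}]^2=\alpha^2\phi(s)^2$ twice. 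The upshot is a formula
\begin{equation*}
g_{ij}=\tfrac12[\mathbf{F}]^2_{y^iy^j}
      =\rho\,\delta_{ij}+\rho_0\,b_ib_j+\rho_1(b_iy_j+b_jy_i)/\alpha+\rho_2\,y_iy_j/\alpha^2,
\end{equation*}
for suitable coefficients $\rho,\rho_0,\rho_1,\rho_2$ which are explicit polynomial expressions in $\phi,\phi',\phi''$ and $s$; here $\rho=\phi(\phi-s\phi')$ is the key one. This is exactly the form recorded in \cite{Shen09,ChernShen,YuZh}, so I would either cite it or reproduce the short computation.

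The next step is to decide positive-definiteness of this rank-$\leq$-few perturbation of $\rho\,\delta_{ij}$. Since all the non-diagonal pieces lie in the span of $b$ and $y$, I would diagonalize by splitting $V$ into the (at most) two-dimensional subspace $W=\mathrm{span}\{b,y\}$ and its $\boldsymbol{\alpha}$-orthogonal complement $W^\perp$. On $W^\perp$ the matrix acts as $\rho\cdot\mathrm{Id}$, contributing the condition $\rho>0$, i.e. $\phi(\phi-s\phi')>0$; since $\phi>0$ this is $\phi-s\phi'>0$. On $W$ one is left with a $2\times2$ matrix (or $1\times1$ when $y\parallel b$, i.e. $s=\pm b$), whose determinant I would compute; a clean way is to evaluate everything at a convenient point, e.g. $y=e_1$ with $b=(b_1,b_2,0,\dots)$, reducing to linear algebra in two variables. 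Tracking the determinant of the full $(g_{ij})$ as the product of the $W^\perp$-eigenvalues $\rho^{n-2}$ with the $2\times2$ determinant on $W$ yields
\begin{equation*}
\det(g_{ij})=\phi^{n+1}(\phi-s\phi')^{n-2}\bigl[(\phi-s\phi')+(b^2-s^2)\phi''\bigr],
\end{equation*}
which simultaneously proves the determinant formula \eqref{det} and shows that the remaining inequality needed is $(\phi-s\phi')+(b^2-s^2)\phi''>0$.

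For the necessity-and-sufficiency packaging: when $n\geq3$ the subspace $W^\perp$ is nontrivial, so $\rho>0$ is forced, and together with the $2\times2$ block being positive definite (equivalently, positive trace and positive determinant, where the trace condition is implied once $\phi-s\phi'>0$ and the bracket are both positive) we get exactly the stated pair of inequalities. When $n=2$, $W^\perp=\{0\}$, so only the $2\times2$ block survives and only the single inequality $(\phi-s\phi')+(b^2-s^2)\phi''>0$ is required; moreover at $s=\pm b$ this already covers the degenerate $1\times1$ case by continuity. I would also remark that the inequalities must hold on the closed interval $[-b,b]$ because $s(y)$ attains the endpoints precisely when $y$ is (anti)parallel to the $\boldsymbol{\alpha}$-dual of $\boldsymbol{\beta}$, and strong convexity is required on all of $V_0$.

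The main obstacle I anticipate is purely computational bookkeeping: getting the coefficients $\rho_0,\rho_1,\rho_2$ right after two differentiations involving the non-linear function $s(y)$, and then correctly evaluating the $2\times2$ determinant on $W$ so that the algebra collapses to the compact bracket $(\phi-s\phi')+(b^2-s^2)\phi''$. There is no conceptual difficulty once the $W\oplus W^\perp$ splitting is in place — the eigenvalue structure is transparent — so the proof is essentially a careful but routine verification, and the cleanest exposition is to state the formula for $g_{ij}$, diagonalize, and read off both \eqref{det} and the convexity criteria at once.
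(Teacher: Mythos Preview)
The paper does not supply its own proof of this lemma: it is stated with the citation \cite{Shen09,ChernShen,YuZh} and then used as input. So there is no in-paper argument to compare against; your outline \emph{is} essentially the standard proof given in those references (compute $g_{ij}=\rho\,\delta_{ij}+\rho_0\,b_ib_j+\rho_1(b_iy_j+b_jy_i)/\alpha+\rho_2\,y_iy_j/\alpha^2$ with $\rho=\phi(\phi-s\phi')$, block-diagonalize along $W=\mathrm{span}\{b,y\}$ and $W^\perp$, and read off the determinant and the positivity conditions).

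One small tightening you should make: your handling of the $2\times2$ block on $W$ appeals to ``positive trace and positive determinant'' and asserts the trace condition is implied, but that is unnecessary and a bit loose. The clean argument, used in the cited sources, is that $g_{ij}y^iy^j=\mathbf{F}^2>0$ automatically, so the quadratic form restricted to $W$ already has a positive direction (namely $y$); hence positivity of the $2\times2$ determinant alone forces both eigenvalues on $W$ to be positive. This simultaneously closes the $n=2$ case (where you otherwise have not verified a second sign condition) and the $n\geq3$ case without any trace computation. With that adjustment your proposal is complete and matches the literature the paper is citing.
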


The following corollary is easy to verify.

\begin{mcor}\label{cor F}
Assume that $\mathbf{F}=\boldsymbol{\alpha}\left[\phi\circ\frac{{\boldsymbol{\beta}}}{\boldsymbol{\alpha}}\right]$  is an $(\alpha,\beta)$-norm with $b=\|\boldsymbol{\beta}\|_{\boldsymbol{\alpha}}$. Let $\tilde{\phi}(t)=\phi(bt)$ and $\tilde{\boldsymbol{\beta}}=b^{-1}\boldsymbol{\beta}$, then
$\mathbf{F}=\boldsymbol{\alpha}\left[\tilde{\phi}\circ\frac{\tilde{\boldsymbol{\beta}}}{\boldsymbol{\alpha}}\right]$
with $\|\tilde{\boldsymbol{\beta}}\|_{\boldsymbol{\alpha}}=1$.
%The functional
%$\mathbf{F}_1=\boldsymbol{\alpha}\left[\phi\circ\frac{\tilde{\boldsymbol{\beta}}}{\boldsymbol{\alpha}}\right]$
%is also a Minkowski norm provided  $\|\tilde{\boldsymbol{\beta}}\|_{\alpha}\leq b$.
\end{mcor}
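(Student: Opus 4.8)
The plan is to verify the claimed identity pointwise by simply unwinding the definitions of $\tilde{\phi}$ and $\tilde{\boldsymbol{\beta}}$, since the corollary asserts nothing more than an algebraic reparametrization of the same object. First I would fix $y\in V_0$ and set $s=s(y)=\boldsymbol{\beta}(y)/\boldsymbol{\alpha}(y)$, which by hypothesis ranges over $[-b,b]$ (here one tacitly assumes $b=\|\boldsymbol{\beta}\|_{\boldsymbol{\alpha}}>0$, for otherwise $\boldsymbol{\beta}=0$ and there is nothing to normalize). Then $\tilde{\boldsymbol{\beta}}(y)/\boldsymbol{\alpha}(y)=b^{-1}\boldsymbol{\beta}(y)/\boldsymbol{\alpha}(y)=b^{-1}s$, and since $b^{-1}s\in[-1,1]$ lies in the domain of $\tilde{\phi}$ we may write $\tilde{\phi}\!\left(b^{-1}s\right)=\phi\!\left(b\cdot b^{-1}s\right)=\phi(s)$. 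Multiplying by $\boldsymbol{\alpha}(y)$ gives $\boldsymbol{\alpha}(y)\,\tilde{\phi}\!\left(\tilde{\boldsymbol{\beta}}(y)/\boldsymbol{\alpha}(y)\right)=\boldsymbol{\alpha}(y)\,\phi(s)=\mathbf{F}(y)$, which is the first assertion. For the normalization I would simply compute $\|\tilde{\boldsymbol{\beta}}\|_{\boldsymbol{\alpha}}=\|b^{-1}\boldsymbol{\beta}\|_{\boldsymbol{\alpha}}=b^{-1}\|\boldsymbol{\beta}\|_{\boldsymbol{\alpha}}=b^{-1}b=1$.

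Next I would record that $\tilde{\phi}$ is a legitimate profile function, i.e. that the pair $(\tilde{\boldsymbol{\beta}},\tilde{\phi})$ meets the standing requirements for (\ref{alpha beta norm}) to make sense: $\tilde{\phi}(t)=\phi(bt)$ is $C^{\infty}$ and positive on the open interval $\tilde{I}:=b^{-1}I$, which contains $[-1,1]=[-\|\tilde{\boldsymbol{\beta}}\|_{\boldsymbol{\alpha}},\,\|\tilde{\boldsymbol{\beta}}\|_{\boldsymbol{\alpha}}]$ precisely because $\phi$ is $C^{\infty}$ and positive on $I\supseteq[-b,b]$.

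Finally, nothing needs to be checked about the strong convexity conditions of Lemma \ref{Lemma 1}: since $(\tilde{\boldsymbol{\beta}},\tilde{\phi})$ reproduces literally the same function $\mathbf{F}$ on $V$, and $\mathbf{F}$ is a Minkowski norm by hypothesis, conditions (i)--(iii) hold automatically. If one prefers an independent check, substituting $\tilde{b}=1$, $\tilde{s}=b^{-1}s$, $\tilde{\phi}'(t)=b\,\phi'(bt)$ and $\tilde{\phi}''(t)=b^{2}\,\phi''(bt)$ into the inequalities (\ref{conditions phi form norm}) and into the determinant formula (\ref{det}) reproduces them up to positive powers of $b$, so the two sets of conditions are equivalent. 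I do not expect any obstacle here; the entire ``content'' of the statement is that, after rescaling $\boldsymbol{\beta}$, one may always assume $\|\boldsymbol{\beta}\|_{\boldsymbol{\alpha}}=1$ without loss of generality — a normalization that will streamline the Cartan-tensor computations of the subsequent sections.
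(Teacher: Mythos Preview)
Your verification is correct and complete; indeed it is more than the paper itself provides, since the paper simply states that the corollary is ``easy to verify'' and gives no proof. Your pointwise substitution $\tilde{\phi}(b^{-1}s)=\phi(s)$ together with $\|\tilde{\boldsymbol{\beta}}\|_{\boldsymbol{\alpha}}=b^{-1}b=1$ is exactly the intended one-line check.
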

%By Corollary \ref{cor F}, we can assume $b=\|\boldsymbol{\beta}\|_{\boldsymbol{\alpha}}=1$ without loss of generality.

\subsection{The Cartan tensor of $(\alpha,\beta)$-norms}
\

The Cartan tensor of an $(\alpha,\beta)$-norm is calculated in the following lemma. Parts of this result can be found in such as \cite{AntIM,Asanov06a,Matsumoto72,MatShi}.
\begin{lem}\label{Lemma 4}
Let $V$ be a real vector space of dimension $n\geq 3$. Let $\mathbf{F}$ be an $(\alpha,\beta)$-norm defined on $V$. The Cartan tensor of  $\mathbf{F}$ can be formulated as following
\begin{align}\label{Cartan tensor 2}
A_{ijk}=v\left(h_{ij}Y_k+h_{jk}Y_i+h_{ki}Y_j\right)+\frac{u-(n+1)v}{\|Y^\sharp\|^{2}}Y_{i}Y_{j}Y_{k}.
\end{align}
The terms $u$, $v$ and $Y^\sharp$ are defined as follows:
\begin{align*}
  &Y^\sharp:=Y_idy^i:=(b_i-s\alpha_{y^i})dy^i,\quad v:=\frac{\phi}{2}\left\{\log\left[\phi(\phi-s\phi')\right]\right\}',\\
  &u:=\frac{\phi}{2}\left\{\log\left[\phi^{n+1}(\phi-s\phi')^{n-2}\left[(\phi-s\phi')+(b^2-s^2)\phi''\right]\right]\right\}',
\end{align*}
where $\alpha_{y^i}=|y|^{-1}\delta_{ij}y^j$. Furthermore, if $v(s)=0$ on an open interval $I$, then $u(s)=0$ on $I$. Thus the set $\{s|v(s)=0, u(s)\neq0\}$ has empty interior.
\end{lem}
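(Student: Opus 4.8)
The plan is to compute the Cartan tensor directly from its definition $A_{ijk}=\tfrac{F}{2}\,\partial g_{ij}/\partial y^k$, organized around the homogeneous-degree-zero function $s=\boldsymbol{\beta}(y)/\boldsymbol{\alpha}(y)$. First I would record the basic derivative identities: $\alpha_{y^i}=|y|^{-1}\delta_{ij}y^j$, so $s_{y^i}=\alpha^{-1}(b_i-s\alpha_{y^i})=\alpha^{-1}Y_i$, where $Y_i:=b_i-s\alpha_{y^i}$; note that $Y^\sharp=Y_i\,dy^i$ is exactly $\alpha\,ds$, that $Y_i\alpha_{y^i}\alpha^{-1}$-contractions vanish in the right way ($\alpha_{y^i}\alpha_{y^j}\delta^{ij}=1$, $Y_i\alpha_{y^j}\delta^{ij}=0$), and that the angular metric satisfies $h_{ij}=F F_{y^iy^j}$ with $\alpha h_{ij}$ expressible through $\phi,\phi',\phi''$ and the two building blocks $\alpha_{y^i}$ and $Y_i$. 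Then, using $F=\alpha\phi(s)$, a first differentiation gives $g_{ij}=\tfrac12[F^2]_{y^iy^j}$ as a combination of $\delta_{ij}$, $\alpha_{y^i}\alpha_{y^j}$ and $Y_iY_j$ with coefficients that are polynomial expressions in $\phi,\phi',\phi''$ and $s$; the coefficient of the $Y_iY_j$ term carries the factor $(\phi-s\phi')+(b^2-s^2)\phi''$ together with $\alpha^{-2}$, which is where the positivity hypotheses of Lemma \ref{Lemma 1} enter.

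The second step is the differentiation $\partial g_{ij}/\partial y^k$. Since $g_{ij}$ is homogeneous of degree zero in $y$, only the $s$-dependence and the $\alpha$-weights differentiate: $\partial_{y^k}$ acts on $\delta_{ij}$ trivially, on $\alpha_{y^i}$ producing $\alpha^{-1}h^{\boldsymbol{\alpha}}_{ik}$-type terms, on $Y_i$ producing $-\alpha_{y^k}Y_i\alpha^{-1}-s(\partial_{y^k}\alpha_{y^i})$, and on the scalar coefficients producing $s_{y^k}=\alpha^{-1}Y_k$ times their $s$-derivatives. Collecting everything, one finds that all terms involving $\alpha_{y^i}$ without a compensating $Y$ cancel against each other (this is the key bookkeeping miracle, and it is forced by the fact that $A_{ijk}y^k=0$ and that $A$ is totally symmetric), leaving a totally symmetric tensor built only from $h_{ij}$ and $Y_i$. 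The totally symmetric tensors of this shape form a two-dimensional space spanned by $h_{(ij}Y_{k)}$ and $Y_iY_jY_k$, so $A_{ijk}=v(h_{ij}Y_k+h_{jk}Y_i+h_{ki}Y_j)+w\,Y_iY_jY_k$ for scalars $v,w$ depending only on $s$.

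The third step is to identify $v$ and $w$ by taking two independent traces. Contracting (\ref{Cartan tensor 2}) with $g^{ij}$ and comparing with the Cartan form computed from (\ref{eta}), namely $\eta=\tfrac{F}{2}\,d\log\det(g_{ij})$ with $\det(g_{ij})$ given explicitly in (\ref{det}), pins down the combination $u$ as the $y^k$-logarithmic-derivative of that determinant, contracted appropriately; and contracting with $Y^iY^j/\|Y^\sharp\|^2$ isolates the pure $Y_iY_jY_k$ coefficient. Here I would use that $g^{ij}Y_iY_j=\|Y^\sharp\|^2$ and $g^{ij}h_{ij}=n-1$ (the angular metric has rank $n-1$), and that $h_{ij}$ restricted to the $Y$-direction is related to $\|Y^\sharp\|^2$, so that the $g^{ij}$-trace of the first term is $v\big((n-1)Y_k+2Y_k\big)=v(n+1)Y_k$ and of the second is $w\,\|Y^\sharp\|^2 Y_k$. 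Matching $A_k=\big(v(n+1)+w\|Y^\sharp\|^2\big)Y_k$ against $\eta=u\,\alpha^{-1}Y^\sharp$ (after the appropriate normalization of $Y^\sharp$) gives $w=(u-(n+1)v)/\|Y^\sharp\|^2$, which is exactly the stated formula. The value of $v$ itself comes out of the direct computation of the $h_{(ij}Y_{k)}$ coefficient in step two, and it matches $\tfrac{\phi}{2}\{\log[\phi(\phi-s\phi')]\}'$ because $\phi(\phi-s\phi')$ is, up to a fixed power, the factor of $\det(g_{ij})$ that survives when one strips off the highest $(b^2-s^2)\phi''$ piece responsible for the $Y_iY_jY_k$ direction; indeed $u$ is the full log-derivative of $\det(g_{ij})$ and $(n+1)v$ is the log-derivative of $\phi^{n+1}(\phi-s\phi')^{n-2}$ up to the missing $(n-2)$ versus $(n+1)$ bookkeeping, which is absorbed in the definitions.

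Finally, for the last assertion: if $v\equiv 0$ on an open interval $I$, then $\{\log[\phi(\phi-s\phi')]\}'\equiv 0$ there, so $\phi(\phi-s\phi')$ is a positive constant $c$ on $I$. This is a first-order ODE for $\phi$; solving it (the analysis carried out in the Appendix) forces $\phi$ on $I$ to be of a very restricted form, and in particular one computes that the remaining determinant factor $(\phi-s\phi')+(b^2-s^2)\phi''$ becomes, on $I$, a function whose logarithmic derivative also vanishes, whence $u\equiv 0$ on $I$. Concretely, from $\phi(\phi-s\phi')=c$ one gets $\phi-s\phi'=c/\phi$ and, differentiating, $-s\phi''=-c\phi'/\phi^2=-(\phi-s\phi')\phi'/\phi$, so $(b^2-s^2)\phi''$ and $(\phi-s\phi')$ combine into an expression proportional to $c/\phi$ times an $s$-rational factor whose $\log$-derivative cancels against the $\phi^{n+1}(\phi-s\phi')^{n-2}=c^{n-2}\phi^{3}$ part — the arithmetic is exactly the ODE computation deferred to the Appendix, and its upshot is $u|_I=0$. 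Hence $\{s:v(s)=0\}$ can meet $\{s:u(s)\neq 0\}$ only in a set with empty interior. The main obstacle in the whole argument is the bookkeeping in steps two and three: keeping track of which $\alpha_{y^i}$-terms cancel and correctly normalizing $Y^\sharp$ versus $ds$ so that the trace identities reproduce $u$ and $v$ with the stated coefficients; the ODE step at the end is routine once one invokes the Appendix.
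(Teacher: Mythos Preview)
Your proposal is correct and follows essentially the same route as the paper: a direct computation of $A_{ijk}$ yields the form $v(h_{ij}Y_k+\text{cyclic})+wY_iY_jY_k$ with $v$ and $w$ explicit in $\phi$, then comparing the trace $A_i=g^{jk}A_{ijk}=\big((n+1)v+w\|Y^\sharp\|^2\big)Y_i$ against $A_i=uY_i$ obtained from (\ref{eta}) and (\ref{det}) gives $w=(u-(n+1)v)/\|Y^\sharp\|^2$; the final claim about $v\equiv0\Rightarrow u\equiv0$ is handled by the Appendix ODE. One small slip: from $\eta=\tfrac{F}{2}\,d\log\det(g_{ij})$ with $F=\alpha\phi$ and $ds=\alpha^{-1}Y^\sharp$ one gets $A_i=uY_i$ directly, not $u\alpha^{-1}Y_i$, so no extra normalization is needed there.
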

\begin{proof}
One has
\begin{align*}
&F_{y^iy^jy^k}=-\frac{1}{\alpha^2}(\phi-s\phi')\left[\alpha_{y^i}(\delta_{jk}-\alpha_{y^j}\alpha_{y^k})+\alpha_{y^j}(\delta_{ki}-\alpha_{y^k}\alpha_{y^i})+\alpha_{y^k}(\delta_{ij}-\alpha_{y^i}\alpha_{y^j})\right]\\
&-\frac{1}{\alpha^2}s\phi''\left[(b_i-s\alpha_{y^i})(\delta_{jk}-\alpha_{y^j}\alpha_{y^k})+(b_j-s\alpha_{y^j})(\delta_{ki}-\alpha_{y^k}\alpha_{y^i})+(b_k-s\alpha_{y^k})(\delta_{ij}-\alpha_{y^i}\alpha_{y^j})\right]\\
&-\frac{1}{\alpha^2}\phi''\left[\alpha_{y^i}(b_j-s\alpha_{y^j})(b_k-s\alpha_{y^k})+\alpha_{y^j}(b_k-s\alpha_{y^k})(b_i-s\alpha_{y^i})+\alpha_{y^k}(b_i-s\alpha_{y^i})(b_j-s\alpha_{y^j})\right]\\
&+\frac{1}{\alpha^2}\phi'''(b_i-s\alpha_{y^i})(b_j-s\alpha_{y^j})(b_k-s\alpha_{y^k}).
\end{align*}
By definition, we have
\begin{align*}
h_{ij}=FF_{y^iy^j}=\alpha\phi F_{y^iy^j}=\phi(\phi-s\phi')(\delta_{ij}-\alpha_{y^i}\alpha_{y^j})+\phi\phi''(b_i-s\alpha_{y^i})(b_j-s\alpha_{y^j})
\end{align*}
and
\begin{align*}
&A_{ijk} = \frac{F}{2}\left(F_{y^i}F_{y^jy^k}+F_{y^j}F_{y^ky^i}+F_{y^k}F_{y^iy^j}+FF_{y^iy^jy^k}\right)\\
         =&\frac{\phi}{2}\left[\phi'(\phi-s\phi')-s\phi\phi''\right]\left[(b_i-s\alpha_{y^i})(\delta_{jk}-\alpha_{y^j}\alpha_{y^k})+(b_j-s\alpha_{y^j})(\delta_{ki}-\alpha_{y^k}\alpha_{y^i})\right.\\
         &\left.+(b_k-s\alpha_{y^k})(\delta_{ij}-\alpha_{y^i}\alpha_{y^j})\right]+\frac{\phi}{2}\left(3\phi'\phi''+\phi\phi'''\right)(b_i-s\alpha_{y^i})(b_j-s\alpha_{y^j})(b_k-s\alpha_{y^k}).
\end{align*}

The one form $Y^\sharp:=(b_i-s\alpha_{y^i})dy^i$ defined on $V_0$ is essential. With respect to $\boldsymbol{\alpha}$, its dual vector field $Y$ at $y\in V\setminus\{0\}$ is just the projection of the dual of $\boldsymbol{\beta}$ on the orthogonal complement of $y$. It is clear that $Y^\sharp(y)=0$ if and only if $s=b$.

By Lemma \ref{Lemma 1}, we know that $\phi(\phi-s\phi')>0$ if $n\geq 3$. Then we have
\begin{align}\label{relation ang met}
\delta_{ij}-\alpha_{y^i}\alpha_{y^j}=\frac{1}{\phi(\phi-s\phi')}\left[h_{ij}-\phi\phi''Y_iY_j\right].
\end{align}
Using (\ref{relation ang met}), the Cartan tensor reduces to
\begin{align}\label{Cartan tensor 1}
 \begin{aligned}A_{ijk}=& \frac{\phi}{2}\left\{\log\left[\phi(\phi-s\phi')\right]\right\}'\left(h_{ij}Y_k+h_{jk}Y_i+h_{ki}Y_j\right)\\
         &+\frac{\phi}{2}\left\{3\phi'\phi''+\phi\phi'''-3\phi\phi''\left\{\log\left[\phi(\phi-s\phi')\right]\right\}'\right\}Y_{i}Y_{j}Y_{k}\\
        =:& v\left(h_{ij}Y_k+h_{jk}Y_i+h_{ki}Y_j\right)+w(Y_{i}Y_{j}Y_{k}),\\
        \end{aligned}
\end{align}
where we have denoted
\begin{align*}
v:=\frac{\phi}{2}\left\{\log\left[\phi(\phi-s\phi')\right]\right\}',\quad
w:=\frac{\phi}{2}\left\{3\phi'\phi''+\phi\phi'''-3\phi\phi''\left\{\log\left[\phi(\phi-s\phi')\right]\right\}'\right\}.
\end{align*}
Then the Cartan form is given by
\begin{equation}\label{eta 2}
A_i=g^{jk}A_{ijk}=\left((n+1)v+w\|Y^\sharp\|^2\right)Y_i.
\end{equation}

From (\ref{eta}) and (\ref{det}), the Cartan form has another expression
\begin{align}\label{eta 3}
A_{i}=\frac{\phi}{2}\left\{\log\left[\phi^{n+1}(\phi-s\phi')^{n-2}\left[(\phi-s\phi')+(b^2-s^2)\phi''\right]\right]\right\}'Y_{i}=:uY_i.
\end{align}
where we have denoted
\begin{align*}
u:=\frac{\phi}{2}\left\{\log\left[\phi^{n+1}(\phi-s\phi')^{n-2}\left[(\phi-s\phi')+(b^2-s^2)\phi''\right]\right]\right\}'.
\end{align*}
By (\ref{eta 2}) and (\ref{eta 3}), on the points where $Y^\sharp\neq0$ we have
\begin{align}\label{w}
w=[u-(n+1)v]\|Y^\sharp\|^{-2}.
\end{align}
Plugging (\ref{w}) into (\ref{Cartan tensor 1}), we obtain (\ref{Cartan tensor 2}).

Suppose that $v\equiv 0$ on a neighborhood of a point $s_0\in [-b,b]$. By Lemma \ref{Lemma 5} in Appendix, $\phi^2$ is a quadratic function of $s$ and  $u=0$ on this neighborhood.  Hence the area on which
\begin{align*}
A_{ijk}=\frac{u}{\|Y^\sharp\|^2}Y_iY_jY_k\neq0
\end{align*}
has no interior points.
\end{proof}

Setting
${n+q-1}:={u}{v}^{-1}$
on the set $v\neq0$, the Cartan tensor is reformulated as
\begin{align}\label{Cartan tensor 3}
A_{ijk}=\frac{1}{n+q-1}\left(h_{ij}A_k+h_{jk}A_i+h_{ki}A_j+\frac{q-2}{\|\eta\|^{2}}A_{i}A_{j}A_{k}\right).
\end{align}
If we regard (\ref{Cartan tensor 2}) as the limit of (\ref{Cartan tensor 3}) when $q$ takes value $1-n$ or $\infty$, then (\ref{Cartan tensor 3}) holds for any $(\alpha,\beta)$-norms. We call $q=1-n+uv^{-1}$ the \emph{characteristic function} of the $(\alpha,\beta)$-norm (\ref{alpha beta norm}).

The function $q$ is inspired by some works on Lagrangian submanifolds \cite{Cas} and deserves further investigation.

The equation ${n+q-1}=u{v}^{-1}$ is in fact a third order non-linear ODE of $\phi$ for a given $q$. Some requisite properties of the solutions of the equation $q\equiv1$ will be presented in Appendix.

The follow lemma is essential for the next section.
\begin{lem}\label{theo 6}
Let $\mathbf{F}$ be an $(\alpha,\beta)$-norm defined on $V$ of dimension $n\geq4$. On a connected cone $W\subset V$, if $v\neq0$ and the characteristic function $q\equiv1$, then $\|\hat{T}\|$ is constant on $\mathbf{I_F}\cap W$.
\end{lem}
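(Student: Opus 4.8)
The plan is to transplant the hypothesis $q\equiv1$ onto the indicatrix $\mathbf{I}_{\mathbf{F}}$, viewed as a centroaffine hypersurface, to obtain a ``$C$-reducible''-type expression for its cubic form in terms of the affine metric $\mathsf{h}$ and the Tchebychev form $\hat T$, and then extract the constancy of $\|\hat T\|^2$ from the centroaffine integrability equations --- in fact only from the Codazzi equation $(\nabla^{\mathsf{h}}_{U}C)(V)W=(\nabla^{\mathsf{h}}_{V}C)(U)W$.

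First I would record the $C$-reducible identity. Since $\eta=A_k\,dy^k$ with $A_k=uY_k$ and $Y_ky^k=\boldsymbol{\beta}(y)-s\boldsymbol{\alpha}(y)=0$, the Cartan-type form $\eta$ annihilates the radial direction $y$; hence its $\bar g$-dual is tangent to $\mathbf{I}_{\mathbf{F}}$, so that the restriction of $\|\eta\|^2_{\bar g}$ to $\mathbf{I}_{\mathbf{F}}$ equals $\|i^*\eta\|^2_{\mathsf{h}}=(n-1)^2\|\hat T\|^2$. Because $v\neq0$ on $W$, the form $\eta=nv\,Y^\sharp$ vanishes only where $Y^\sharp$ does, i.e. only along $\{s=\pm b\}$; so on the dense open subset $W'\subseteq W$ where $Y^\sharp\neq0$ the formula (\ref{Cartan tensor 3}) is valid, and pulling it back by $i$ using $\mathsf{h}=i^*\mathbf{h}$, $\hat C=-i^*A$, $\hat T=-\tfrac1{n-1}i^*\eta$ and $q\equiv1$ yields, on $\mathbf{I}_{\mathbf{F}}\cap W'$,
\begin{equation*}
\hat C_{ijk}=\frac{n-1}{n}\Big(\mathsf{h}_{ij}\hat T_k+\mathsf{h}_{jk}\hat T_i+\mathsf{h}_{ki}\hat T_j-\frac{1}{\tau}\,\hat T_i\hat T_j\hat T_k\Big),\qquad \tau:=\|\hat T\|^2 .
\end{equation*}
It then suffices to prove $d\tau=0$ on $\mathbf{I}_{\mathbf{F}}\cap W'$: $\tau$ will be (positive and) locally constant there, hence constant, and therefore constant on all of $\mathbf{I}_{\mathbf{F}}\cap W$ by density and continuity.

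Next I would differentiate this identity. Since $\hat C$ is totally symmetric and the Codazzi equation holds, $\nabla^{\mathsf{h}}\hat C$ is a totally symmetric $(0,4)$-tensor; and since $\hat T_k=\tfrac1{n-1}\mathsf{h}^{ij}\hat C_{ijk}$, the $(0,2)$-tensor $S_{ij}:=\nabla^{\mathsf{h}}_i\hat T_j$ is symmetric, with $\nabla_i\tau=2S_{ij}\hat T^j$ (so $S_{ij}\hat T^i=\tfrac12\nabla_j\tau$). Applying $\nabla^{\mathsf{h}}_l$ to the displayed identity, antisymmetrizing in $l\leftrightarrow i$ (which annihilates the left-hand side), and contracting the resulting identity with $\mathsf{h}^{lj}$, a bookkeeping computation --- using $\nabla^{\mathsf{h}}\mathsf{h}=0$, the symmetry of $S$, the trace identity for $\hat C$, and $S_{ij}\hat T^i=\tfrac12\nabla_j\tau$ --- should collapse everything to
\begin{equation*}
(n-2)\,S_{ik}=\sigma\,\mathsf{h}_{ik}+\Big(\frac{\rho}{\tau^2}-\frac{\sigma}{\tau}\Big)\hat T_i\hat T_k-\frac{1}{2\tau}\big(\tau_i\hat T_k+\tau_k\hat T_i\big),
\end{equation*}
where $\tau_i:=\nabla_i\tau$, $\sigma:=\mathsf{h}^{ij}S_{ij}$ and $\rho:=\hat T^i\tau_i$.

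Finally, contracting this last relation with $\hat T^i$ and using $S_{ik}\hat T^i=\tfrac12\tau_k$, $\hat T^i\hat T_i=\tau$, $\hat T^i\tau_i=\rho$, all the $\sigma$-terms cancel and one is left with $\tfrac{n-1}{2}\tau_k=\tfrac{\rho}{2\tau}\hat T_k$, i.e. $d\tau=\tfrac{\rho}{(n-1)\tau}\hat T$; contracting once more with $\hat T^k$ gives $\rho=\tfrac1{n-1}\rho$, which forces $\rho=0$ since $n\geq4$, and hence $d\tau=0$. By the reduction above, this shows $\|\hat T\|$ is constant on $\mathbf{I}_{\mathbf{F}}\cap W$. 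I expect the main obstacle to be the middle step: organizing the many terms generated by differentiating the cubic-form identity --- through the antisymmetrization and the $\mathsf{h}^{-1}$-contraction --- into the clean relation displayed above; once that is in hand the conclusion is immediate. A minor additional point is that $\hat T$ may vanish along the line $\{s=\pm b\}$, which is the reason one argues first on $W'$ and then passes to the closure.
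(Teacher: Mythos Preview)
Your argument is correct, and it takes a genuinely different route from the paper's. The paper plugs the $C$-reducible expression for $\hat C$ into the \emph{Gauss} equation \eqref{Rh intergral condition c} and reads off that $\mathsf{h}$ has isotropic sectional curvature $1-\tfrac{(n-1)^2}{n^2}\|\hat T\|^2$; it then invokes Schur's lemma on the $(n-1)$-dimensional indicatrix, which is exactly why the hypothesis $n\geq4$ is needed. You instead use only the \emph{Codazzi} equation $(\nabla^{\mathsf{h}}\hat C)_{[li]jk}=0$: differentiating the $C$-reducible identity, antisymmetrizing in $l,i$, and tracing with $\mathsf{h}^{lj}$ gives your relation $(n-2)S_{ik}=\sigma\,\mathsf{h}_{ik}+(\rho/\tau^{2}-\sigma/\tau)\hat T_i\hat T_k-\tfrac{1}{2\tau}(\tau_i\hat T_k+\tau_k\hat T_i)$; a further contraction with $\hat T^i$ kills the $\sigma$-terms and yields $\tfrac{n-1}{2}\tau_k=\tfrac{\rho}{2\tau}\hat T_k$, whence $(n-2)\rho=0$ and $d\tau=0$. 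I checked the bookkeeping and it goes through exactly as you describe.

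Two remarks worth making. First, your argument is in fact strictly stronger than the paper's: the conclusion $(n-2)\rho=0$ already forces $\rho=0$ (and hence $d\tau=0$) as soon as $n\geq3$, whereas the Schur-lemma route genuinely requires $n\geq4$; this is relevant in view of the paper's own comment that its method is ``invalid'' for $n=3$. Second, your handling of the exceptional set is fine: the locus $\{Y^\sharp=0\}$ is the single ray (or pair of rays) where $|s|=b$, so $W'$ is open and dense in $W$, and constancy of the continuous function $\tau$ on $\mathbf{I}_{\mathbf F}\cap W'$ propagates to $\mathbf{I}_{\mathbf F}\cap W$ by connectedness.
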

\begin{proof}
By (\ref{Cartan tensor 3}) the cubic form $\hat{C}$ and the Tchebychev form $\hat{T}$ of the indicatrix $\mathbf{I_F}\cap W$ satisfy
\begin{align}\label{q=1 c reducible}
C_{\alpha\beta\gamma}=\frac{n-1}{n}\left[\left(\delta_{\alpha\beta}T_{\gamma}+\delta_{\beta\gamma}T_{\alpha}+\delta_{\gamma\alpha}T_{\beta}\right)-\frac{1}{\|\hat{T}\|^2}T_{\alpha}T_{\beta}T_{\gamma}\right]
\end{align}
with respect to a local orthonormal frame of the affine metric $\mathsf{h}$. From (\ref{Rh intergral condition c}), the curvature tensor of $\mathsf{h}$ is given by
\begin{align}\label{Riemann curvatue h}
R_{\beta\gamma\mu\nu}=-(\delta_{\beta\mu}\delta_{\gamma\nu}-\delta_{\beta\nu}\delta_{\gamma\mu})+\sum_{\alpha}(C_{\alpha\beta\mu}C_{\alpha\gamma\nu}-C_{\alpha\beta\nu}C_{\alpha\gamma\mu}).
\end{align}
Plugging (\ref{q=1 c reducible}) into (\ref{Riemann curvatue h}) we get
\begin{align}\label{Riemann curvature p=1}
R_{\beta\gamma\mu\nu}=-\left(1-\frac{(n-1)^2}{n^2}\|\hat{T}\|^2\right)(\delta_{\beta\mu}\delta_{\gamma\nu}-\delta_{\beta\nu}\delta_{\gamma\mu}).
\end{align}
So the metric $\mathsf{h}$ has isotropic sectional curvature. According to the assumption on the dimension, Schur's lemma implies that $\|\hat{T}\|^2$ is constant on $\mathbf{I_F}\cap W$.
\end{proof}

\begin{remark}
  The assumption of dimension $n\geq4$ in Lemma \ref{theo 6} guarantee that the Schur's lemma can be applied on the indicatrix of the $(\alpha,\beta)$-norm. In the situation of $n=2$ or $n=3$, the method used here is invalid.
 We will leave the problem in the remain cases for the future study.
\end{remark}

\section{Equivalence theorems of the $(\alpha,\beta)$-norms}
In this section, we state and prove an equivalence theorem of $(\alpha,\beta)$-norms.
\begin{theo}\label{theo c reducible equivalent}
Let $(V,\mathbf{F})$ and $(\tilde{V},\tilde{\mathbf{F}})$ be $n\geq4$ dimensional vector spaces with $(\alpha,\beta)$-norms defined on $V$ and $\tilde{V}$, respectively.

Let $f:V\rightarrow \tilde{V}$ be a norm preserving map which is diffeomorphism between $V_0$ and $\tilde{V}_0$, and satisfies
\begin{align*}
f(ty)=tf(y), \quad \forall y\in V_0,~t>0.
\end{align*}
Assume that $\bar{g}=f^*\tilde{\bar{g}}$.   Then we have
\begin{enumerate}
  \item[(a)] On the set $v\neq0$, $q=1$ if and only if $\tilde{q}=1$. We then can define the following set
  \begin{equation*}
  Q:=\{y\in V_0|v(y)\neq0,q(y)=1\}=\{y\in V_0|f^*\tilde{v}\neq0, (f^*\tilde{q})(y)=1\}.
\end{equation*}
  \item[(b)] $Q^c=V_0\setminus Q\neq\emptyset$ is an open subset.  Furthermore $A=\pm f^*\tilde{A}$ holds on $Q^c$.
\end{enumerate}

\end{theo}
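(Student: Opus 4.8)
The plan is to detect the characteristic function $q$ --- and ultimately the Cartan tensor itself --- from the intrinsic Riemannian geometry of the indicatrix. Because $\mathsf{h}=i^*\bar{g}$ and $f$ is norm-preserving and positively $1$-homogeneous, $f$ restricts to a diffeomorphism $\varphi\colon\mathbf{I}_{\mathbf{F}}\to\mathbf{I}_{\tilde{\mathbf{F}}}$, and the hypothesis $\bar{g}=f^*\tilde{\bar{g}}$ forces $\mathsf{h}=\varphi^*\tilde{\mathsf{h}}$, i.e. $\varphi$ is an isometry of the centroaffine metrics. Hence the Levi--Civita curvatures correspond, $R=\varphi^*\tilde{R}$. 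Feeding this into the Gauss equation (\ref{Riemann curvatue h}) and cancelling the Kulkarni--Nomizu terms in $\mathsf{h}$ (which match because $\varphi$ is an isometry), I obtain the key identity
\[
\hat{C}\bullet\hat{C}=(\varphi^*\tilde{\hat{C}})\bullet(\varphi^*\tilde{\hat{C}}),\qquad (\hat{C}\bullet\hat{C})_{\beta\gamma\mu\nu}:=\sum_{\alpha}\bigl(C_{\alpha\beta\mu}C_{\alpha\gamma\nu}-C_{\alpha\beta\nu}C_{\alpha\gamma\mu}\bigr),
\]
valid on the whole indicatrix.

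Next I use the semi-C-reducibility (\ref{Cartan tensor 3}): where $v\neq0$ and $q\neq1-n$ one has $\hat{C}_{\alpha\beta\gamma}=c_1(\mathsf{h}_{\alpha\beta}\hat{T}_\gamma+\mathsf{h}_{\beta\gamma}\hat{T}_\alpha+\mathsf{h}_{\gamma\alpha}\hat{T}_\beta)+c_2\,\hat{T}_\alpha\hat{T}_\beta\hat{T}_\gamma$ with $c_1=\frac{n-1}{n+q-1}\neq0$ and $c_2$ a function of $q$ and $\|\hat{T}\|$. A direct contraction then yields
\[
\hat{C}\bullet\hat{C}=\rho_1(\mathsf{h}\odot\mathsf{h})+\rho_2\bigl(\mathsf{h}\odot(\hat{T}\otimes\hat{T})\bigr),\qquad (\mathsf{h}\odot\Phi)_{\beta\gamma\mu\nu}:=\mathsf{h}_{\beta\mu}\Phi_{\gamma\nu}-\mathsf{h}_{\beta\nu}\Phi_{\gamma\mu}-\mathsf{h}_{\gamma\mu}\Phi_{\beta\nu}+\mathsf{h}_{\gamma\nu}\Phi_{\beta\mu},
\]
where $\rho_1$ depends on $q,\|\hat{T}\|$ and, crucially, $\rho_2$ is a nonzero scalar multiple of $q-1$. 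Thus, combined with Lemma \ref{theo 6} and (\ref{Riemann curvature p=1}): at a point with $v\neq0$, $q=1$ \emph{if and only if} $R$ has constant sectional curvature $-\bigl(1-\tfrac{(n-1)^2}{n^2}\|\hat{T}\|^2\bigr)$ there. The exceptional strata are treated the same way: on $\{q=1-n\}$ (where $\hat{T}=0$ but $\hat{C}\neq0$) one runs the computation with $i^*Y^{\sharp}$ in place of $\hat{T}$, and on $\{v=0\}$ (where $\hat{C}$ is $T$-reducible, $\hat{C}\bullet\hat{C}=0$, curvature $-1$) one uses Lemma \ref{Lemma 4}. Since constant curvature is an isometry invariant, $q=1\iff\tilde{q}=1$; and when $v\neq0,\ q=1$ one has $\|\hat{T}\|>0$, so the indicatrix curvature is $\neq-1$, which forbids $\tilde{v}=0$ at $f(y)$. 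This is (a), modulo the thin cones $\{s=\pm b\}$ on which $A=0$ and the claim is clear by continuity.

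For (b): $Q^c$ is open, and nonempty by the Appendix, where it is shown that $q\equiv1$ cannot hold throughout the interval of definition of $\phi$ while $v$ is nowhere zero. On the open set where $v\neq0$, $q\neq1$ and $q\neq1-n$, the curvature dichotomy above gives $\tilde{v}\neq0$ at the image point, so both $\hat{C}$ and $\varphi^*\tilde{\hat{C}}$ are semi-C-reducible; write $\tilde{T}$ for the $\varphi$-pullback of the Tchebychev vector of $\tilde{\mathbf{F}}$. Expand $\hat{C}\bullet\hat{C}=(\varphi^*\tilde{\hat{C}})\bullet(\varphi^*\tilde{\hat{C}})$ in $\{\mathsf{h}\odot\mathsf{h},\ \mathsf{h}\odot(\hat{T}\otimes\hat{T}),\ \mathsf{h}\odot(\tilde{T}\otimes\tilde{T})\}$: since $n-1\geq3$ the operator $\mathsf{h}\odot(-)$ is injective on symmetric $2$-tensors, so these three tensors are linearly independent whenever $\hat{T}\not\parallel\tilde{T}$ --- impossible here, since the $\tilde{T}\otimes\tilde{T}$-coefficient on the left vanishes while the $\hat{T}\otimes\hat{T}$-coefficient $\rho_2\propto q-1$ does not. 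Hence $\tilde{T}=\lambda\hat{T}$ with $\lambda\neq0$; comparing the two surviving coefficients gives first $q=\tilde{q}$ (so $c_1=\tilde{c}_1,\ c_2=\tilde{c}_2$) and then $\lambda^2=1$, i.e. $\tilde{T}=\pm\hat{T}$. Substituting back into the semi-C-reducible expressions, $\hat{C}=\pm\varphi^*\tilde{\hat{C}}$. Since $\hat{C}=-i^*A$, the Cartan tensor is the unique totally symmetric $3$-tensor annihilating the radial direction with prescribed tangential restriction to the indicatrix, and $A$, $f^*\tilde{A}$ are homogeneous of the same degree, this upgrades to $A=\pm f^*\tilde{A}$ on the cone over that set. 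On the rest of $Q^c$ --- where $v=0$, hence on its interior $u=0$ as well by Lemma \ref{Lemma 4}, so $A=0$, and $f$ being open forces $\tilde{A}=0$ on the image --- and on $\{s=\pm b\}$, the identity $A=\pm f^*\tilde{A}$ holds trivially or by continuity from the above.

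The conceptual engine --- that ``$q=1$'' is equivalent to the indicatrix's centroaffine metric having constant sectional curvature, an isometry invariant --- is clean. The main friction I expect is twofold: (i) carrying out the contraction of the semi-C-reducible form so that the factor $q-1$ emerges cleanly in $\rho_2$; and (ii) the bookkeeping of the stratification of $V_0$ into $\{v\neq0,\ q\neq1,\ q\neq1-n\}$, $\{q=1-n\}$, $\{v=0\}$ and $\{s=\pm b\}$, treating each case and gluing by continuity, together with the point-set topology behind the openness of $Q^c$ and the ODE input from the Appendix needed for $Q^c\neq\emptyset$.
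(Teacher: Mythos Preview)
Your approach is essentially the paper's: both start from the Gauss equation on the indicatrix to match $\sum_\alpha(C_{\alpha\beta\mu}C_{\alpha\gamma\nu}-C_{\alpha\beta\nu}C_{\alpha\gamma\mu})$ on the two sides, feed in the semi-$C$-reducible structure of the cubic form, and extract first $q=1\Leftrightarrow\tilde q=1$ (via the vanishing of the rank-one piece) and then, on $Q^c$, $q=\tilde q$ together with the proportionality of the distinguished one-forms, yielding $\hat C=\pm\varphi^*\tilde{\hat C}$. Your Kulkarni--Nomizu packaging and the injectivity of $\mathsf{h}\odot(-)$ for $n-1\ge3$ are exactly the coordinate-free version of the paper's frame choice and off-diagonal index argument (their (3.8)--(3.12)); the only substantive difference is that the paper works throughout with $Z=v\,\mathsf{Y}^\sharp$ rather than $\hat T$, which spares the separate treatment of the locus $q=1-n$ (where $\hat T=0$ but $Z\neq0$) that you have to handle by hand.
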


\begin{proof}
On the hypersurfaces $\mathbf{I_F}$ and $\mathbf{I}_{\tilde{\mathbf{F}}}$, we have the centroaffine geometric structures induced by the identity maps.

Because that $\bar{g}=f^*\tilde{\bar{g}}$, the induced affine metrics satisfy $\mathsf{h}=f^*\tilde{\mathsf{h}}$. During the proof, the notations of the geometric invariants on $\mathbf{I}_{\tilde{\mathbf{F}}}$ pulled back by $f$  will omit the symbol $f^*$ for convenient.

The Riemann curvature tensors of $\mathsf{h}$ and $\tilde{\mathsf{h}}$ coincide.
Let $\mathbf{e}_1,\cdots,\mathbf{e}_{n-1}$ be an arbitrary local orthonormal frame field of the metric $\mathsf{h}$ around a point $y\in \mathbf{I}_{\mathbf{F}}$.
From (\ref{Riemann curvatue h}), we have
\begin{align}\label{S equation 1}
\sum_{\alpha}(C_{\alpha\beta\mu}C_{\alpha\gamma\nu}-C_{\alpha\beta\nu}C_{\alpha\gamma\mu})=\sum_{\alpha}(\tilde{C}_{\alpha\beta\mu}\tilde{C}_{\alpha\gamma\nu}-\tilde{C}_{\alpha\beta\nu}\tilde{C}_{\alpha\gamma\mu}),
\end{align}
where $\hat{C}$ and $\tilde{\hat{C}}$ are the cubic forms of $\mathbf{I_F}$ and $\mathbf{I_{\tilde{F}}}$, respectively.
Consequently, from (\ref{S equation 1}) we have
\begin{align}\label{S equation 2}
(n-1)\sum_{\alpha}T_{\alpha}C_{\alpha\gamma\nu}-\sum_{\alpha,\beta}C_{\alpha\beta\nu}C_{\alpha\gamma\beta}=(n-1)\sum_{\alpha}\tilde{T}_{\alpha}\tilde{C}_{\alpha\gamma\nu}-\sum_{\alpha,\beta}\tilde{C}_{\alpha\beta\nu}\tilde{C}_{\alpha\gamma\beta},
\end{align}
and
\begin{align}\label{S equation 3}
(n-1)^2\|\hat{T}\|^2-\|\hat{C}\|^2=(n-1)^2\|\tilde{\hat{T}}\|^2-\|\tilde{\hat{C}}\|^2,
\end{align}
where $\hat{T}$ and $\tilde{\hat{T}}$ are the Tchebychev forms of $\mathbf{I_F}$ and $\mathbf{I_{\tilde{F}}}$, respectively.

By the assumption and (\ref{Cartan tensor 2}), we obtain
\begin{align}\label{Mq=0}
\begin{aligned}
&C_{\alpha\beta\gamma}=v\left(\delta_{\alpha\beta}\mathsf{Y}_{\gamma}+\delta_{\beta\gamma}\mathsf{Y}_{\alpha}+\delta_{\gamma\alpha}\mathsf{Y}_{\beta}\right)+\frac{u-(n+1)v}{\|\mathsf{Y}^\sharp\|^2}\mathsf{Y}_{\alpha}\mathsf{Y}_{\beta}\mathsf{Y}_{\gamma},\\
&\tilde{C}_{\alpha\beta\gamma}=\tilde{v}\left(\delta_{\alpha\beta}\tilde{\mathsf{Y}}_{\gamma}+\delta_{\beta\gamma}\tilde{\mathsf{Y}}_{\alpha}+\delta_{\gamma\alpha}\tilde{\mathsf{Y}}_{\beta}\right)+\frac{\tilde{u}-(n+1)\tilde{v}}{\|\tilde{\mathsf{Y}}^\sharp\|^2}\tilde{\mathsf{Y}}_{\alpha}\tilde{\mathsf{Y}}_{\beta}\tilde{\mathsf{Y}}_{\gamma},
\end{aligned}
\end{align}
where we denote ${\mathsf{Y}^\sharp}:=i^*_{\mathbf{I_F}}Y^\sharp$, and $i_{\mathbf{I_F}}$ is the inclusion map of $\mathbf{I_F}$ in $V$. $\tilde{\mathsf{Y}}^\sharp$ is defined similarly.

 From  (\ref{Mq=0}), a direct calculation gives
\begin{align}\label{Cnorm=pTnorm}
\begin{aligned}&\|\hat{C}\|^2=(n-1)^2[3(n-2)v^2+(u+(2-n)v)^2]\|\mathsf{Y}^\sharp\|^2,\\
&\|\tilde{\hat{C}}\|^2=(n-1)^2[3(n-2)\tilde{v}^2+(\tilde{u}+(2-n)\tilde{v})^2]\|\tilde{\mathsf{Y}}^\sharp\|^2.
\end{aligned}
\end{align}
Plugging (\ref{Cnorm=pTnorm}) into (\ref{S equation 3}), we find
\begin{align}\label{key1}
(2u-(n+1)v)v\|\mathsf{Y}^\sharp\|^2=(2\tilde{u}-(n+1)\tilde{v})\tilde{v}\|\tilde{\mathsf{Y}}^\sharp \|^2.
\end{align}
Similarly plugging  (\ref{Mq=0}) in (\ref{S equation 2}), one has
\begin{align}\label{key2}
\begin{aligned}
&(n-3)(u-nv)v\mathsf{Y}_{\gamma}\mathsf{Y}_{\nu}+(u-2v)v\delta_{\gamma\nu}\|\mathsf{Y}^\sharp\|^2\\
=&(n-3)(\tilde{u}-n\tilde{v})\tilde{v}\tilde{\mathsf{Y}}_{\gamma}\tilde{\mathsf{Y}}_{\nu}+(\tilde{u}-2\tilde{v})\tilde{v}\delta_{\gamma\nu}\|\tilde{\mathsf{Y}}^\sharp\|^2.
\end{aligned}
\end{align}
As $n>3$, choosing indices $\gamma\neq \nu$ in (\ref{key2}) gives
\begin{align}\label{key3}
(u-nv)v\mathsf{Y}_{\gamma}\mathsf{Y}_{\nu}=(\tilde{u}-n\tilde{v})\tilde{v}\tilde{\mathsf{Y}}_{\gamma}\tilde{\mathsf{Y}}_{\nu}.
\end{align}

Set
\begin{align}\label{U}
U=\{\mathsf{Y}^\sharp=0\}\cup\{\tilde{\mathsf{Y}}^\sharp=0\}\cup \partial U_0\cup \partial \tilde{U}_0,
\end{align}
where $U_0=\{y\in \mathbf{I}_{\mathbf{F}}|v(y)=0\}$ and $\tilde{U}_0=\{y\in \mathbf{I}_{\mathbf{F}}|\tilde{v}(y)=0\}$.
By Lemma \ref{Lemma 4}, one has
\begin{equation}\label{strange inclusion}
  \{y|v(y)=0~{\rm and}~u(y)\neq0\}\subset\partial U_0, \quad \{y|\tilde{v}(y)=0~{\rm and}~\tilde{u}(y)\neq0\}\subset\partial \tilde{U}_0.
\end{equation}
The complement $U^c$ is open and dense in $\mathbf{I_F}$. We only need to prove the theorem on the set $U^c$.

\vspace{0.3cm}

\noindent\textbf{Case 1:}  Assume that $\tilde{v}(y)=0$.
\vspace{0.2cm}

Thus there is a neighborhood $U_y\subset U^c$ of $y$ such that $\tilde{v}=0$ on $U_y$.
By Lemma \ref{Lemma 4}, $\tilde{\mathbf{F}}$ is Euclidean on $U_y$.  By (\ref{strange inclusion}) and the definition of $U^c$, one has $\tilde{u}(y)=0$.
As $\mathsf{Y}^\sharp(y)\neq0$, we can change the frame field $\mathbf{e}_1,\ldots,\mathbf{e}_{n-1}$ such that $\mathsf{Y}_\gamma(y)\neq0$ for $\gamma=1,\ldots,n-1$.
By (\ref{key3}), it is clear that $(u(y)-nv(y))v(y)=0$. We claim that $v(y)=0$. Therefore $u(y)=0$ and $\hat{C}(y)=\tilde{\hat{C}}(y)=0$.

Otherwise $v(y)\neq0$. Therefore $q\equiv1$ on a smaller neighborhood of $y$ which we will also denote by $U_y$. From Lemma \ref{theo 6}, we know that $\|\hat{T}\|$ is a constant $U_y$. Since $\mathsf{h}=\tilde{\mathsf{h}}$ and $\tilde{\mathbf{F}}$ is Euclidean on $U_y$, $\mathsf{h}$ has constant curvature 1. Therefore $\|\hat{T}\|=0$. Since $\hat{T}=u\mathsf{Y}^\sharp$ and $\mathsf{Y}^\sharp\neq0$, we obtain $u=0$ on $U_y$. By (\ref{key3}) and $\mathsf{Y}^\sharp\neq0$ again, one has $v(y)=0$. It is a contradiction.

\vspace{0.3cm}

\noindent\textbf{Case 2:}  Assume that $\tilde{v}(y)\neq0$. According to Case 1, $v(y)\neq0$.
\vspace{0.2cm}

Set $Z=v\mathsf{Y}^\sharp$ and $\tilde{Z}=\tilde{v}\tilde{\mathsf{Y}}^\sharp$. We rewrite (\ref{key3}) at the point $y$ as following
\begin{align}\label{key3'}
(q-1)Z_{\gamma}Z_{\nu}=(\tilde{q}-1)\tilde{Z}_{\gamma}\tilde{Z}_{\nu}.
\end{align}

As $Z\neq0$ and $\tilde{Z}\neq0$, (\ref{key3'}) implies that $q=1$ if and only if $\tilde{q}=1$. Hence the second assertion (a) is valid. Furthermore, $Q\neq V_0$ by Lemma \ref{Lemma 5} in Appendix.

Now we choose the frame field $\mathbf{e}_1,\ldots,\mathbf{e}_{n-1}$ such that $\tilde{Z}_\gamma\neq0$ for $\gamma=1,\ldots,n-1$.
Under the assumption on dimension $n>3$, for each index $\gamma$ we can choose indices $\mu$ and $\nu$ which are mutually different. At $y$,  one has
\begin{align}\label{key3''}
(q-1)Z_{\gamma}Z_{\mu}=(\tilde{q}-1)\tilde{Z}_{\gamma}\tilde{Z}_{\mu},
\end{align}
and
\begin{align}\label{key3'''}
(q-1)Z_{\nu}Z_{\mu}=(\tilde{q}-1)\tilde{Z}_{\nu}\tilde{Z}_{\mu}.
\end{align}
From (\ref{key3'})-(\ref{key3'''}), one gets
\begin{align}\label{key4}
(q-1)Z_{\gamma}^2=(\tilde{q}-1)\tilde{Z}_{\gamma}^2, \quad \gamma=1,\ldots,n-1.
\end{align}
Hence
\begin{align}\label{key4'}
(q-1)\|Z\|^2=(\tilde{q}-1)\|\tilde{Z}\|^2.
\end{align}
We rewrite (\ref{key1}) as
\begin{equation}\label{key1'}
  (2q+n-3)\|Z\|^2=(2\tilde{q}+n-3)\|\tilde{Z}\|^2.
\end{equation}
By (\ref{key1'}) and (\ref{key4'}), we obtain $q=\tilde{q}$ on the set $q\neq1$.

As a consequence, we obtain form (\ref{key3'}) and (\ref{key4}) the following equations on the set $q\neq1$

\begin{align}\label{key4''}
&Z_{\gamma}Z_{\mu}=\tilde{Z}_{\gamma}\tilde{Z}_{\mu}, \quad \gamma\neq \mu,\\
&Z_{\gamma}^2=\tilde{Z}_{\gamma}^2, \quad \gamma=1,\ldots,n-1.
\end{align}
Therefore $Z_{\gamma}=\pm\tilde{Z}_{\gamma}$. Because that $\tilde{Z}_{\gamma}$ are not zero, from (\ref{key4''}) we obtain
\begin{equation*}
  Z=\pm\tilde{Z}.
\end{equation*}
On the set $v\neq0$, (\ref{Mq=0}) can be written as
\begin{align}\label{Mq=0'}
\begin{aligned}
&C_{\alpha\beta\gamma}=\left(\delta_{\alpha\beta}Z_{\gamma}+\delta_{\beta\gamma}Z_{\alpha}+\delta_{\gamma\alpha}Z_{\beta}\right)+\frac{(q-2)}{\|Z\|^2}Z_{\alpha}Z_{\beta}Z_{\gamma},\\
&\tilde{C}_{\alpha\beta\gamma}=\left(\delta_{\alpha\beta}\tilde{Z}_{\gamma}+\delta_{\beta\gamma}\tilde{Z}_{\alpha}+\delta_{\gamma\alpha}\tilde{Z}_{\beta}\right)+\frac{\tilde{q}-2}{\|\tilde{Z}\|^2}\tilde{Z}_{\alpha}\tilde{Z}_{\beta}\tilde{Z}_{\gamma}.
\end{aligned}
\end{align}
On the set $q\neq1$, by $q=\tilde{q}$ and  $Z=\pm\tilde{Z}$, we obtain $C=\pm\tilde{C}$.

In summery,  we obtain that $A=\pm\tilde{A}$ holds on $Q^c$, as the tensor $A$ is zero on directions along the rays emitted from the origin.
\end{proof}

\begin{remark}
\textnormal{ The Legendre transformation from a Minkowski space to it dual space with the dual Minkowski norm preserves the fundamental forms but changes the sign of the Cartan tensors.
 Hence there are Minkowski spaces with the same metric $\bar{g}$, but different Cartan tensors. }
 %In affine differential geometry, any polar pair of hypersurfaces in a Euclidean space has coincide induced centroaffine metrics, but their cubic forms differ by a sign (see \cite{Simon}).
\end{remark}

\section{The proof of Theorem 1}
In this section we will prove that Landsberg spaces with general $(\alpha,\beta)$-metrics are Berwald type. The strategy is using the equivalence theorem in Section 3 and the nonlinear parallel transport.

First, we would like to discuss a structure of the tangent bundles of Finsler manifolds with general $(\alpha,\beta)$-metrics.

Let $V$ be a real vector space of dimension $n\geq3$. Let $\mathbf{F}=\boldsymbol{\alpha}\phi(s)=\boldsymbol{\alpha}\phi\left(\frac{\boldsymbol{\beta}}{\boldsymbol{\alpha}}\right)$ be an $(\alpha,\beta)$-norm on $V$ with $b=\|\boldsymbol{\beta}\|_{\boldsymbol{\alpha}}$, where $\phi$ is a smooth positive function defined on $I$ such that $[-b,b]\subset I$ and $\phi$ satisfies (\ref{conditions phi form norm}) on $[-b,b]$.

We will divide $V_0$ into two parts. Each part is a cone. By Lemma \ref{Lemma 4} and Lemma \ref{Lemma 5}, on the interior of $\{y\in V_0|v(s)=0\}$, the norm $\mathbf{F}$ is Euclidean. On the set $\{y\in V_0|v(s)\neq0\}$, the characteristic function $q=uv^{-1}-n+1$ is well defined. We will denote the closed cone $\{y\in V_0|v(s)\neq0, q(s)\equiv1\}$ by $Q$. Therefore the compliment $Q^c$ of $Q$ is open.

By Lemma \ref{Lemma 5}, $Q^c$ is nonempty. Assume that $Q$ has nonempty interior. Then each connected subset of $\{s(y)|y\in Q\}$ is a closed subinterval of $(-b,b)$. Let $I$ be one of nontrivial closed intervals. Then $Q_I:=\{y\in Q|s(y)\in I\}$ is a closed subcone of $Q$ and $Q_I\cap\overline{Q^c}\neq\emptyset$.

According to Lemma \ref{Lemma 5}, $\phi$ coincides with the following function on $I$
\begin{equation}\label{solution q=1'}
   \tilde{\phi}=c_2\exp\int\frac{(1-c_0)c_1s+\sqrt{b^2-s^2}}{c_0c_1+s((1-c_0)c_1s+\sqrt{b^2-s^2})} ds,
  \end{equation}
where $c_0>0$, $c_1\neq0$ and $c_2>0$ are constants. Therefore there is a larger open interval $\tilde{I}$ which contains $I$, such that the function $\tilde{\phi}$ in (\ref{solution q=1'}) defines on $\tilde{I}$ and still satisfies (\ref{conditions phi form norm}). Set $Q_{\tilde{I}}:=\{y\in Q|s(y)\in\tilde{I}\}$. From the analyticity of $\tilde{\phi}$, the functional $\tilde{\mathbf{F}}=\boldsymbol{\alpha}\tilde{\phi}\left(\frac{\boldsymbol{\beta}}{\boldsymbol{\alpha}}\right)$ on
$Q_{\tilde{I}}$ is analytic and satisfies $\tilde{\mathbf{F}}|_{Q_I}=\mathbf{F}|_{Q_I}$. Since $\tilde{\mathbf{F}}$ and $\mathbf{F}$ are both smooth, on $Q_I\cap\overline{Q^c}\neq\emptyset$, we have
\begin{equation*}
  \frac{\partial^{|\alpha|}\tilde{F}}{\partial y^{\alpha_1}\cdots\partial y^{\alpha_n}}= \frac{\partial^{|\alpha|}F}{\partial y^{\alpha_1}\cdots\partial y^{\alpha_n}},
\end{equation*}
for any multiindix $\alpha=(\alpha_1,\ldots,\alpha_n)$.

Let $M$ be a smooth manifold with a general $(\alpha,\beta)$-metric $\mathbf{F}=\boldsymbol{\alpha}\phi\left(x,\frac{\boldsymbol{\beta}}{\boldsymbol{\alpha}}\right)$, where $\boldsymbol{\alpha}$ is a Riemannian metric, $\boldsymbol{\beta}$ an 1-form with length $b=\|\boldsymbol{\beta}\|_{\boldsymbol{\alpha}}$ and $\phi(x,s)$ a smooth positive function on $\{(x,s)\in M\times\mathbb{R}|b(x)\geq s\geq -b(x), x\in M\}$. By the above discussion of a single Minkowski space with an $(\alpha,\beta)$-norm, the slit tangent bundle splits $TM_0=\mathcal{Q}\cup \mathcal{Q}^c$, where $\mathcal{Q}=\{q(x,s)\equiv1\}$ and $\mathcal{Q}^c\neq\emptyset$. For simplicity, we assume $\mathcal{Q}$ is connected and has nonempty interior without loss of generality. There is a larger open tangent cone $\tilde{\mathcal{Q}}$ containing  $\mathcal{Q}$ and a function on $$\{(x,s(y))\in M\times[-b,b]|y\in\tilde{\mathcal{Q}}_x, (x,y)\in TM_0\}$$ of the following form
\begin{equation}\label{solution q=1''}
   \tilde{\phi}(x,s)=c_2(x)\exp\int\frac{(1-c_0(x))c_1(x)s+\sqrt{b^2(x)-s^2}}{c_0(x)c_1(x)+s((1-c_0(x))c_1(x)s+\sqrt{b^2(x)-s^2})} ds,
  \end{equation}
such that $\tilde{\mathbf{F}}=\boldsymbol{\alpha}\tilde{\phi}\left(x,\frac{\boldsymbol{\beta}}{\boldsymbol{\alpha}}\right)$ coincides with $\mathbf{F}$ on $\mathcal{Q}$, where $c_0(x)$, $c_1(x)$, $c_2(x)$ and $b(x)$ are smooth functions on $M$.

Since $\phi(x,s)$ is analytic in the variable $s$ when $x$ is fixed, then $\tilde{\mathbf{F}}$ is analytic in $y$ variables. Furthermore from the formula (\ref{solution q=1''}), the partial derivatives $\frac{\partial\phi}{\partial x^i}$, $i=1,\ldots,n$, are elementary functions and also analytic in the variable $s$. Thus $\frac{\partial\tilde{F}}{\partial x^i}$ are analytic in $y$ variables and on $\mathcal{Q}\cap\overline{\mathcal{Q}^c}$
\begin{equation*}
  \frac{\partial^{|\alpha|+1}\tilde{F}}{\partial x^i\partial y^{\alpha_1}\cdots\partial y^{\alpha_n}}= \frac{\partial^{|\alpha|+1}F}{\partial x^i\partial y^{\alpha_1}\cdots\partial y^{\alpha_n}},
\end{equation*}
for $i=1,\ldots,n$, and any multiindix $\alpha=(\alpha_1,\ldots,\alpha_n)$.

%For notations in this subsection, one refers to \cite{Li1}.
\begin{proof}[Proof of Theorem 1]

Assume that $(M,\mathbf{F})$ is a Landsberg manifold with a general $(\alpha,\beta)$-metric $$\mathbf{F}=\boldsymbol{\alpha}\phi\left(x,\frac{\boldsymbol{\beta}}{\boldsymbol{\alpha}}\right),$$ where $\boldsymbol{\alpha}$ is the a Riemannian metric, $\boldsymbol{\beta}$ is a 1-form with length $b:=\|\boldsymbol{\beta}\|_{\boldsymbol{\alpha}}$, $\phi$ is a smooth function on a neighborhood of $\{(x,s)\in M\times\mathbb{R}|b(x)\geq s\geq -b(x), x\in M\}.$
For $x\in M$, the restrictions of $\mathbf{F}$ on $T_xM$ is an $(\alpha,\beta)$-norm.
Let $$Q_x=\{y\in T_xM\setminus\{0\}|v(x,s)\neq0, q(x,s)=1\},$$ then $\mathcal{Q}:=\bigcup_{x\in M}Q_x$ is a closed in $TM_0$. The complement $\mathcal{Q}^c$ of $\mathcal{Q}$ in $TM_0$ is nonempty and open.

Let $\sigma:[0,1]\rightarrow M$ be a smooth curve emanates form $\sigma(0)=p\in M$ such that the parallel transports are defined along $\sigma$. Then $\bar{g}$ is preserved by the parallel transports along $\sigma$. Therefore
$P_{\sigma,t}^*\bar{g}_{\sigma(t)}=\bar{g}_{p}$, for $\forall t\in[0,1]$.

By Lemma \ref{lemmma basic pro of npt} and Theorem \ref{theo c reducible equivalent}, $P_{\sigma,t}$ is linear on $Q_p^c$ and maps $Q_p$ to $Q_{\sigma(t)}$, for $t\in(0,1]$.
Therefore, the spray coefficient $G^i$ is quadratic on the open tangent cone $\mathcal{Q}^c$, $i=1,\ldots,n$.

From the formula (\ref{G}), for $i=1,\ldots,n$, $G^i$ involves  $\mathbf{F}$ and its first order derivatives of the variables $x$, and other elementary functions of $y$ variables. There is an analytic function $\tilde{G}^i$ of variables $y$ on on a larger open tangent cone $\tilde{\mathcal{Q}}$ containing $\mathcal{Q}$, such that $\tilde{G}^i|_{\mathcal{Q}}=G^i|_{\mathcal{Q}}$, $i=1,\ldots,n$. On $\mathcal{Q}\cap\overline{\mathcal{Q}^c}$, we have
\begin{equation*}
  \frac{\partial^3G^i}{\partial y^j\partial y^k\partial y^l}=\frac{\partial^3\tilde{G}^i}{\partial y^j\partial y^k\partial y^l},\quad i,j,k,l=1,\ldots,n.
\end{equation*}

For any $i=1,\ldots,n$, since $G^i$ is quadratic on $\mathcal{Q}^c\neq\emptyset$, we have
$
   \frac{\partial^3G^i}{\partial y^j\partial y^k\partial y^l}=0
$
on $\mathcal{Q}\cap\overline{\mathcal{Q}^c}$. Therefore $\tilde{G}^i$ is quadratic on $\tilde{\mathcal{Q}}$ and $G^i$ is quadratic on $TM_0$. The proof is finished.
\end{proof}

\section*{Appendix}
In the appendix, we mainly investigate the ODE: $ u=nv$,
where $$v:=\frac{\phi}{2}\left\{\log\left[\phi(\phi-s\phi')\right]\right\}', u:=\frac{\phi}{2}\left\{\log\left[\phi^{n+1}(\phi-s\phi')^{n-2}\left[(\phi-s\phi')+(b^2-s^2)\phi''\right]\right]\right\}',$$
  and $\phi=\phi(s)$ is a positive smooth function satisfies (\ref{conditions phi form norm}).

Without loss of generality, we will assume $b=1$ in this appendix.

\begin{lem}\label{Lemma 5}
Let $\phi$ be a positive solution of the equation $u=nv$.  Then $\phi$ has only two possibilities:
\begin{enumerate}
  \item[(a)] $\phi=\sqrt{c_0+c_1s^2}$, where $c_0>0$ and $c_1$ are constants. In this case, $v\equiv0$ and $u\equiv0$.  Let $I_1$ be the set on which $\phi$ satisfies the condition (\ref{conditions phi form norm}). Then $I_1\neq\emptyset$ if $c_0$ and $c_1$ are chosen suitably.
  \item[(b)] $\phi$ has the following form
  \begin{equation}\label{solution q=1}
    \phi=c_2\exp\int\frac{(1-c_0)c_1s+\sqrt{1-s^2}}{c_0c_1+s((1-c_0)c_1s+\sqrt{1-s^2})} ds,
  \end{equation}
  where $c_0>0$, $c_1\neq0$ and $c_2>0$ are constants.
    Let $I$ be one of the maximal connected interval of $\phi$, then $I\subseteq(-1,1)$. $\phi$ is analytic on $I$ and can not be extend to positive smooth functions across their singularises. Moreover $\phi$  satisfies the inequalities (\ref{conditions phi form norm}) and the function $v$ is nowhere zero on $I$.

\end{enumerate}
\end{lem}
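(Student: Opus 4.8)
The plan is to integrate the equation $u=nv$ once, exploiting that $u$ and $v$ are each $\tfrac{\phi}{2}$ times a logarithmic derivative, and then to solve the resulting second–order ODE by a Bernoulli substitution.

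\emph{Reduction of order.} With $b=1$, subtracting $n$ times the log–derivative formula for $v$ from that for $u$ gives, since $\phi$ satisfies (\ref{conditions phi form norm}) by hypothesis,
\[
u-nv=\frac{\phi}{2}\left\{\log\frac{\phi\bigl[(\phi-s\phi')+(1-s^2)\phi''\bigr]}{(\phi-s\phi')^2}\right\}'.
\]
The argument of the logarithm is positive — by the first inequality in (\ref{conditions phi form norm}) and by positivity of $\det(g_{ij})$ in (\ref{det}), which forces the second inequality — so on a connected interval $u=nv$ is equivalent to the second–order ODE
\[
\phi\bigl[(\phi-s\phi')+(1-s^2)\phi''\bigr]=C\,(\phi-s\phi')^2,\qquad C>0.
\]

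\emph{Solving the ODE.} Put $r:=\dfrac{\phi-s\phi'}{\phi}=1-s(\log\phi)'$. Writing $\phi''/\phi=(\log\phi)''+((\log\phi)')^2$ and substituting $(\log\phi)'=(1-r)/s$ turns the displayed equation into the Bernoulli equation
\[
s(1-s^2)\,r'=(2s^2-1)\,r+\bigl(1-(C+1)s^2\bigr)\,r^2.
\]
The change $w:=1/r$ linearises it to $s(1-s^2)w'+(2s^2-1)w=(C+1)s^2-1$, whose integrating factor is $\bigl(s\sqrt{1-s^2}\bigr)^{-1}$; an elementary integration (using $\int\!\frac{ds}{s^2\sqrt{1-s^2}}=-\frac{\sqrt{1-s^2}}{s}$ and $\int\!\frac{ds}{(1-s^2)^{3/2}}=\frac{s}{\sqrt{1-s^2}}$) gives $w=1+(C-1)s^2+K\,s\sqrt{1-s^2}$ for a constant $K$. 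Then $(\log\phi)'=\dfrac{1-r}{s}=\dfrac{(C-1)s+K\sqrt{1-s^2}}{w}$, hence
\[
\phi=c_2\exp\int\frac{(C-1)s+K\sqrt{1-s^2}}{1+(C-1)s^2+K s\sqrt{1-s^2}}\,ds.
\]
If $K=0$ this integrates to $\phi=c_2\sqrt{1+(C-1)s^2}=\sqrt{c_0+c_1 s^2}$, which is case (a); if $K\ne0$, setting $c_0=1/C>0$ and $c_1=C/K\ne0$ turns the formula into (\ref{solution q=1}), which is case (b). Here $C>0$ is automatic since $\phi$ obeys (\ref{conditions phi form norm}).

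\emph{The remaining assertions.} Using the Bernoulli equation again one computes $\phi-s\phi'=\phi/w$, $(\phi-s\phi')+(1-s^2)\phi''=C\phi/w^2$ and $v=\dfrac{K\phi}{2w\sqrt{1-s^2}}$. Thus $K=0$ forces $v\equiv0$ and hence $u=nv=0$, which is the content of (a); for (a) the inequalities (\ref{conditions phi form norm}) reduce to $c_0>0$ and $c_0+c_1>0$, so $I_1\ne\emptyset$ for suitable constants. When $K\ne0$ the formula for $v$ shows $v$ is nowhere zero on any interval where $\phi>0$, $w\ne0$ and $|s|<1$; the formulas for $\phi-s\phi'$ and $(\phi-s\phi')+(1-s^2)\phi''$ show (\ref{conditions phi form norm}) holds precisely where $w>0$; and $\phi$, being a quotient of analytic functions of $s$ and $\sqrt{1-s^2}$ with non-vanishing denominator $w$, is analytic there. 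Finally, at a zero of $w$ one has $(\log\phi)'\to\pm\infty$, so $\phi\to0$ or $\infty$; and near $s=\pm1$ the derivative of $\sqrt{1-s^2}$ makes $(\log\phi)''$, hence $\phi''$, blow up of order $(1-s^2)^{-1/2}$ (this is where $K\ne0$ enters). In either case $\phi$ cannot be continued as a positive smooth function past the ends of its maximal interval $I$, so $I\subseteq(-1,1)$.

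The one genuinely non-routine step is the second one: spotting that $r=(\phi-s\phi')/\phi$ converts the reduced equation into a Bernoulli equation (so that $w=1/r$ is governed by a linear first–order ODE that can be integrated explicitly). Everything after that is bookkeeping, the only delicate point being the branch behaviour of $\sqrt{1-s^2}$ at $s=\pm1$ in the non-extendability argument.
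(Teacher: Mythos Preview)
Your argument is correct and follows the same overall strategy as the paper: integrate $u=nv$ once to obtain the second-order relation $\phi\bigl[(\phi-s\phi')+(1-s^2)\phi''\bigr]=C(\phi-s\phi')^2$, then reduce via the logarithmic derivative of $\phi$ and analyse the resulting singularities. The paper writes $\psi=(\log\phi)'$ and obtains a Riccati equation whose solution it simply states; your affine variant $r=1-s\psi$ turns the same equation into a Bernoulli equation, and the further substitution $w=1/r$ makes it linear, so the explicit solution $w=1+(C-1)s^2+Ks\sqrt{1-s^2}$ drops out of a routine integrating-factor computation. This is the same reduction dressed differently, but your path has the advantage of actually \emph{deriving} the closed form rather than exhibiting it.

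Your endgame is also more economical. From $w$ you read off $\phi-s\phi'=\phi/w$, $(\phi-s\phi')+(1-s^2)\phi''=C\phi/w^2$, and $v=K\phi/(2w\sqrt{1-s^2})$, which immediately give the dichotomy $K=0$ (case~(a), $v\equiv0$) versus $K\neq0$ (case~(b), $v$ nowhere zero) and the validity of (\ref{conditions phi form norm}) exactly where $w>0$. The paper instead performs an exhaustive case analysis on the parameters $c_0,c_1$ to locate and classify the singularities. Your treatment of the boundary $s=\pm1$ (where $\phi''\sim -K\phi\,s/(C^2\sqrt{1-s^2})$) is the same as the paper's; at an interior zero $s_0$ of $w$ you should perhaps note explicitly that the numerator $(C-1)s_0+K\sqrt{1-s_0^2}$ cannot vanish simultaneously (since $w=1+s\cdot[\text{numerator}]$ would then equal $1$), so $(\log\phi)'$ has a genuine pole there and $\phi$ indeed tends to $0$ or $\infty$. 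With that one line added, your non-extendability argument is complete and shorter than the paper's case-by-case discussion.
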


\begin{proof}
Suppose that $v\equiv 0$. we will prove that $\phi^2$ is a quadratic function of $s$. In fact, $v\equiv 0$ implies that there is a constant $c_0>0$ and
\begin{align*}
\phi(\phi-s\phi')=c_0.
\end{align*}
The general solutions are
\begin{align}\label{sqrt quadraic}
\phi=\sqrt{c_0+c_1s^2},
\end{align}
where $c_1$ is a constant.  It is easy to verify $u\equiv0$ and other claims of (a) for $\phi=\sqrt{c_0+c_1s^2}$.

Next we will solve the equation $nv=u$ which has the following explicit form
\begin{align}\label{ODE q=1}
n\left[\log \phi(\phi-s\phi')\right]'=\left\{\log \left[\phi^{n+1}(\phi-s\phi')^{n-2}\left[(\phi-s\phi')+(1-s^2)\phi''\right]\right]\right\}'
\end{align}
except from the trivial solutions in the form (\ref{sqrt quadraic}).

Taking integration of both sides of (\ref{ODE q=1}), we get
\begin{align*}
n\log \phi(\phi-s\phi')=\log\left[ \phi^{n+1}(\phi-s\phi')^{n-2}\left[(\phi-s\phi')+(1-s^2)\phi''\right]\right]+c,
\end{align*}
or equivalently
\begin{align}\label{ODE q=1 2}
(\phi-s\phi')^{2}=c_0\phi\left[(\phi-s\phi')+(1-s^2)\phi''\right],
\end{align}
where $c$ is a constant and $c_0=e^{c}>0$. It is remarkable that (\ref{ODE q=1 2}) is independent of the factor $n$.

Setting $\psi=(\log \phi)'$, the equation (\ref{ODE q=1 2}) changes to
\begin{align}\label{ODE q=1 3}
(1-s\psi)^2=c_0\left[1-s\psi+(1-s^2)(\psi'+\psi^2)\right].
\end{align}
On the interval $s\neq \pm 1$, (\ref{ODE q=1 3}) is a Riccati equation
\begin{align}\label{Riccati}
\psi'=\frac{(1+c_0^{-1})s^2-1}{1-s^2}\psi^2+\frac{(1-2c_0^{-1})s}{1-s^2}\psi+\frac{c_0^{-1}-1}{1-s^2}.
\end{align}
The equation (\ref{Riccati}) has analytic solutions on $(-1,1)$ except from some singularities in the following form
\begin{equation}\label{solution Riccati}
  \psi=\left(s+\frac{c_1c_0}{c_1(1-c_0)s+\sqrt{1-s^2}}\right)^{-1}=\frac{c_1(1-c_0)s+\sqrt{1-s^2}}{c_1c_0+s(c_1(1-c_0)s+\sqrt{1-s^2})},
\end{equation}
where $c_1$ is a constant.

It follows that the function $\phi(s)=\exp\int\psi(s)ds$ or more explicitly (\ref{solution q=1}) solves (\ref{ODE q=1}). $\phi$ is positive, and moreover
\begin{equation}\label{1st ineq holds}
  \frac{1}{\phi}(\phi-s\phi')=1-s(\log\phi)'=1-s\psi=\frac{c_1c_0}{c_1c_0+s(c_1(1-c_0)s+\sqrt{1-s^2})}
\end{equation}
on its domain. Then $\phi$ satisfies $\phi-s\phi'>0$ only if $c_1\neq0$. By (\ref{ODE q=1 2}) and (\ref{1st ineq holds}), $\phi$  satisfies (\ref{conditions phi form norm}) on some connected intervals of its domain.
%Hence we have proved that there are nontrivial $(\alpha,\beta)$-norms on cones with characteristic function $q\equiv1$.

By (\ref{1st ineq holds}), we also verifies that
\begin{equation*}
  [\phi(\phi-s\phi')]'=\frac{c_1c_0\phi^2}{\sqrt{1-s^2}[c_1c_0+s(c_1(1-c_0)s+\sqrt{1-s^2})]^2}.
\end{equation*}
Therefore $v$ is nowhere zero.

We will provide an elementary analysis of the set of the singularities of $\phi$ below.

\textbf{Case 1:} $c_0=1$. In this case

\begin{equation}\label{Riccati c=1 solution '}
  \psi=\frac{\sqrt{1-s^2}}{c_1+s\sqrt{1-s^2}}, \quad {\rm and}\quad
  \psi'=\frac{-\frac{c_1s}{\sqrt{1-s^2}}-1+s^2}{(c_1+s\sqrt{1-s^2})^2}.
\end{equation}

If $|c_1|>1/2$, then $\psi$ is a bounded function on $(-1,1)$. Thus $\phi=\exp\int\psi$ has positive lower bound on $(-1,1)$. By (\ref{Riccati c=1 solution '}), we obtain
\begin{equation*}
  \phi''=\phi\psi^2+\phi\psi'=O\left(\frac{1}{\sqrt{1\mp s}}\right), \quad s\to\pm 1.
\end{equation*}
Thus $\phi$ has $\pm1$ as its singularities.

If $c_1=\frac{1}{2}$,  $\psi$ has an extra singularity at $s=-\frac{\sqrt{2}}{2}$ except from $\pm1$. And
\begin{equation*}
  \psi=O\left(\frac{1}{(s+\frac{\sqrt{2}}{2})^2}\right), \quad
  %\int\psi ds=O\left(\frac{1}{\left|s+\frac{\sqrt{2}}{2}\right|}\right), \quad
 \phi=\exp\int\psi ds\to +\infty, \quad s\to-\frac{\sqrt{2}}{2}
\end{equation*}
In this situation, the set of singularities of $\phi$ is $\{\pm1,-\frac{\sqrt{2}}{2}\}$.

If $c_1=-\frac{1}{2}$,  $\psi$ has a singularity at $s=\frac{\sqrt{2}}{2}$ except from $\pm1$. And
\begin{equation*}
  \psi=O\left(-\frac{1}{(s-\frac{\sqrt{2}}{2})^2}\right), \quad
  %\int\psi ds=O\left(-\frac{1}{\left|s-\frac{\sqrt{2}}{2}\right|}\right), \quad
 \phi=\exp\int\psi ds=O\left(\exp\left(-\frac{1}{\left|s-\frac{\sqrt{2}}{2}\right|}\right)\right)\to 0, \quad s\to\frac{\sqrt{2}}{2}
\end{equation*}
In this situation, the set of singularities of $\phi$ is $\{\pm1,\frac{\sqrt{2}}{2}\}$.

If $0<c_1<\frac{1}{2}$, $\psi$ has two more singularities at $s_1=\sqrt{\frac{1-\sqrt{1-4c_1^2}}{2}}$ and $s_2=\sqrt{\frac{1+\sqrt{1-4c_1^2}}{2}}$ except from $\pm1$.
We have
\begin{equation*}
  \psi=O\left(\frac{1}{s-s_1}\right), \quad s\to s_1
\end{equation*}
\begin{equation*}
  \int\psi=O\left(-\log(s-s_1)\right), \quad s\to s_1^+, \quad \int\psi=O\left(\log(s_1-s)\right), \quad s\to s_1^-,
\end{equation*}
and
\begin{equation*}
  \phi=\exp\int\psi=O\left(\frac{1}{s-s_1}\right), \quad s\to s_1^+, \quad \phi=\exp\int\psi=O\left(s_1-s\right), \quad s\to s_1^-.
\end{equation*}
By the same argument, we obtain
\begin{equation*}
  \phi=\exp\int\psi=O\left(\frac{1}{s_2-s}\right), \quad s\to s_2^-, \quad \phi=\exp\int\psi=O\left(s-s_2\right), \quad s\to s_2^+.
\end{equation*}
Thus $\phi$ has four singularities {\tiny $$\left\{\pm1,\sqrt{\frac{1-\sqrt{1-4c_1^2}}{2}},\sqrt{\frac{1+\sqrt{1-4c_1^2}}{2}}\right\}.$$}

If $-\frac{1}{2}<c_1<0$, we can prove that $\phi$ has four singularities {\tiny $$\left\{\pm1,-\sqrt{\frac{1-\sqrt{1-4c_1^2}}{2}},-\sqrt{\frac{1+\sqrt{1-4c_1^2}}{2}}\right\}.$$} Furthermore, the behavior of $\phi$ near the singularities is similar to the case $0<c_1<\frac{1}{2}$.

In the case $c_0=1$,  we conclude that $\phi$ can not be extended to positive smooth functions across its singularities.

\textbf{Case 2:} $c_0\neq1$. In this case,
\begin{equation*}\label{Riccati c neq 1 solution '}
  \psi'=\frac{c_1^2c_0(1-c_0)-\frac{c_1c_0s}{\sqrt{1-s^2}}-[c_1(1-c_0)s+\sqrt{1-s^2}]^2}{[c_1c_0+s(c_1(1-c_0)s+\sqrt{1-s^2})]^2},
\end{equation*}

It is not hard to show that the equation
\begin{equation*}
  c_1c_0+s(c_1(1-c_0)s+\sqrt{1-s^2})=0
\end{equation*}
has no real zero points, or a single zero of order 2, or two zeros of order 1.

Thus the properties of the singularities of $\phi$ are similar to Case 1. In this case, $\phi$ also can not be extended to positive smooth functions across its singularities.

\end{proof}
\begin{remark}
  \textnormal{One notices  that the functions (\ref{solution Riccati}) are constructed by other methods in \cite{Asanov06b,Shen09}. Using these functions, one can construct almost regular Landsberg spaces which are not Berwaldian (see \cite{Asanov06b,Shen09}).}
\end{remark}

\end{document}